\setlist[enumerate]{font={\bfseries}}
\setlist[enumerate,1]{label={(\arabic*)}}
\theoremstyle{plain}
\newtheorem{thm}{Theorem}[section]
\newtheorem{cor}{Corollary}[section]
\newtheorem{defi}{Definition}[section]
\newtheorem{thma}{Theorem}
\numberwithin{equation}{section}
\begin{document} 
\title{A Matrix Analogue of Schur-Siegel-Smyth Trace Problem}
\author{Srijonee Shabnam Chaudhury}
\address{ Srijonee Shabnam Chaudhury
@Harish-Chandra Research Institute, A CI of Homi Bhabha National Institute,
Chhatnag Road, Jhunsi,  Allahabad 211 019, India.}
\email{srijoneeshabnam@hri.res.in}
\keywords{ Symmetrizable Matrices, Hermitian Matrices, Integer Diagonal, Schur-Siegel-Smyth Trace Problem, Minimal polynomial, Characteristic polynomial}
\subjclass[2020] {15A18, 15B36, 15B57, 11C08}
\maketitle
\begin{abstract}

Let $\mathcal{S}$ be the set of all  positive-definite, symmetrizable integer matrices with non-zero upper and lower diagonal and $\mathcal{T}$ to be the set of all positive-definite real symmetric matrices with nonzero upper diagonal such that all non-zero entries are square-roots of some positive integers and the matrices satisfy a certain cycle condition.

In this paper, for any $n \times n$ matrix $A \in \mathcal{S} \cup \mathcal{T}$ and any $k \in \mathbb{N}$ we find a general lower bound for $Tr_{2^k}(A)$, i.e, the sum of $2^k$-th power of eigenvalues of $A$, which depends on $n$ as well as some other variables. In particular, we obtain the best possible lower bound for $Tr_2(A) $ that is $6n - 5$. As a strong outcome of this result we show that the smallest limit point of $\overline{Tr_2(A)} = \frac{Tr_2(A)}{n}$ is $6$. This is a solution of an analogue of ``Schur - Siegel - Smyth trace problem" for characteristic polynomials of matrices in $\mathcal{S} \cup \mathcal{T}$. We also obtain a lower bound of smallest limit point of $\overline{Tr_{2^k}(A)}$ for any positive integer $k > 1$ and for the same set of matrices. Furthermore, we exhibit that the famous results of Smyth on density of absolute trace measure and absolute trace-2 measure of totally positive integers are also true for the set of symmetric integer connected positive definite matrices.
\end{abstract}
\maketitle

\section{Introduction:}
Let $\alpha$ be a totally positive algebraic integer. For any positive integer $k$ the sum of $k$-th power of all conjugates of $\alpha$ is called the \textit{trace-$k$ measure} of $\alpha$ and it is denoted by $Tr_k(\alpha)$. If we divide this measure by the degree of $\alpha$ then we get the \textit{absolute trace-$k$ measure} of $\alpha$ which is denoted by $\overline{Tr_k(\alpha)}$. For $k = 1$ the \textit{trace-$1$ measure of $\alpha$} and \textit{absolute trace-$1$ measure} of $\alpha$ are simply called the \textit{trace} of $\alpha$ and $\textit{absolute trace}$ of $\alpha$ respectively.

The ``Schur - Siegel - Smyth trace problem" states that --

\medskip

\textit{ For any $\rho<2$ prove that there exists only finitely many totally positive algebraic integers with $\overline{Tr(\alpha)} < \rho$ and if possible find all such algebraic integers.}

This problem is still open but some remarkable progress has been made in recent times. See \cite{S}, \cite{AP1},\cite{Si}, \cite{ABP}, \cite{AP2}, \cite{DW}, \cite{F1} \cite{S2} for history and current development on this problem.

We would like to mention one of the most important results on this problem here which helped to generalise the spectrum of this problem. For this let us define \textit{Spectrum of Absolute Trace-$k$ measures} that is, set of Absolute Trace-$k$ measure for all totally positive algebraic integer $\alpha$ except $0$ and $\pm 1$. We denote it by 
    $$
    \mathcal{A}_k(\alpha) = \{  \overline{{{Tr_{k}}}}(\alpha) |\alpha \neq 0, \pm 1 \ \text{is a totally positive algebraic integer} \}
    $$
    In 1984, Smyth \cite{S1} carried out a detailed analysis of the structure of $\mathcal{A}_k(\alpha)$ for $k \geq 1$ and showed that for $k=1$ and $\nu_1 = 1.7719$ there exists only finitely many elements of $\mathcal{A}_1$ in $(1, \nu_1)$, the number of elements is unknown in $(\nu_1, 2)$ and the set is dense in $[2, \infty)$. Similarly for $k = 2$ and $\nu_2= 5.19610$, he showed that there exists finitely many elements of $\mathcal{A}_2$ in $(1, \nu_2)$, the number of elements is unknown in $(\nu_2, 6)$ and the set is dense in $[6, \infty)$. For higher values of $k$ he also obtained similar pattern.

This result gave rise to the analogue of ``Schur - Siegel - Smyth trace problem". In particular, for $k=2$ the statement is the following --

\textit{ For any $\nu < 6$ prove that there exists only finitely many totally positive algebraic integers with $\overline{Tr(\alpha)} < \nu$ and if possible find all such algebraic integers.}

For more details on this problem see for example \cite{F2}, \cite{LW}, \cite{CXQ}.

Although the ``Schur - Siegel - Smyth trace problem" and the analogous problems are strenuous to handle with, from some recent results [See \cite{SM}, \cite{MY}] it seems that it is not very demanding to prove some results on the same if we try to find it for some combinatorial objects (such as graph, matrix, etc.) attached with an algebraic integer.

In 2013, Mckee and Yatsyna \cite{MY} obtained that $Tr(A) \geq 2n-1$ for an $n \times n$ connected, positive-definite integer symmetric matrix $A$. Using this bound they solved the matrix analogue of ``Schur - Siegel - Smyth trace problem" for this class of matrices. After that in 2020,  Smyth and Mckee \cite{SM} extended this result for a larger class  which consists of all connected, positive-definite, symmetrizable integer matrices and showed that in this case also  the same $2n-1$  serves as the lower bound of trace of matrices.

As a continuation of the study done in last two papers, in this paper we obtain a best-possible lower bound of \textit{Trace-$2$ measure} of any positive-definite symmetrizable integer matrix with non-zero upper and lower diagonal and any positive-definite symmetric matrix with non-zero upper diagonal and with some other certain conditions. And as a generalisation of this result we also obtain a lower bound for \textit{Trace-$2^k$ measure} for the same set of matrices.

Precisely, our main result proves that --

\begin{thma} \label{th1}
    Let $A$ be a positive-definite symmetrizable integer matrix with non-zero upper and lower diagonal or a positive-definite symmetric matrix with non-zero upper diagonal such that all non-zero entries are square root of some positive integers and the matrix satisfy a certain cycle condition. Then

    \begin{itemize}

        \item $Tr_2 (A) \geq 6n-5 $.

        \item $Tr_{2^k} (A) \geq 2^{2^{k-1}} + (n-2)6^{2^{k-1}} +  5^{2^{k-1}}$ for all $k \in \mathbb{N}$. 

    \end{itemize}
\end{thma}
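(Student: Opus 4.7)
The plan is to reduce each $Tr_{2^k}(A)$ to a sum over diagonal entries of a power of $A$, and then exploit the integrality consequences of membership in $\mathcal S$ or $\mathcal T$ together with the Mckee--Yatsyna/Mckee--Smyth trace bound.

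\emph{For the first bound}, I would expand
\[
Tr_2(A) \;=\; Tr(A^2) \;=\; \sum_{i=1}^n A_{ii}^2 \;+\; 2\sum_{i<j} A_{ij}A_{ji}.
\]
Each $A_{ii}^2$ is a positive integer (directly in $\mathcal T$, as the square of a positive integer in $\mathcal S$). Each nonzero product $A_{ij}A_{ji}$ is also a positive integer: in $\mathcal S$, the symmetrizer relation $d_iA_{ij}=d_jA_{ji}$ with $d_i,d_j>0$ forces $A_{ij}$ and $A_{ji}$ to share sign, while in $\mathcal T$, symmetry plus the cycle/square-root conditions suffice. The $n-1$ nonzero super/sub-diagonal pairs therefore contribute at least $2(n-1)$ to the off-diagonal sum. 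For the diagonal, the inequality $Tr(A)\geq 2n-1$ (Mckee--Yatsyna in the symmetric case, Mckee--Smyth in the symmetrizable case, extended to $\mathcal T$ via the cycle condition) combined with Cauchy--Schwarz yields $\sum A_{ii}^2 \geq (2n-1)^2/n > 4n-4$, and integrality of $\sum A_{ii}^2$ sharpens this to $\sum A_{ii}^2 \geq 4n-3$. Summing: $Tr_2(A) \geq (4n-3) + 2(n-1) = 6n-5$.

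\emph{For the $k\geq 2$ bound}, the main tool is Schur--Horn majorization. Since $A^2$ is positive semi-definite with eigenvalues $\lambda_i^2$, and $\phi(x)=x^{2^{k-1}}$ is convex on $[0,\infty)$,
\[
Tr_{2^k}(A) \;=\; \sum_i \phi(\lambda_i^2) \;\geq\; \sum_{i=1}^n \phi\bigl((A^2)_{ii}\bigr) \;=\; \sum_{i=1}^n c_i^{2^{k-1}},
\]
where $c_i:=(A^2)_{ii} = A_{ii}^2 + \sum_{j\neq i} A_{ij}A_{ji}$ is a positive integer by the arguments above. The target $2^{2^{k-1}} + (n-2)\,6^{2^{k-1}} + 5^{2^{k-1}}$ is realised exactly at the extremal tridiagonal matrix $A_{11}=1$, $A_{ii}=2$ for $i\geq 2$, $A_{i,i+1}=\pm 1$, whose diagonal of $A^2$ is precisely $(2,6,\dots,6,5)$; the question becomes whether no valid $A$ produces a smaller $\sum c_i^{2^{k-1}}$.

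\emph{The main obstacle} is justifying this lower bound uniformly in $k$, because pure convexity/Jensen arguments are too weak: the most ``balanced'' integer configuration with $\sum c_i = 6n-5$ (for instance five $5$'s and $n-5$ sixes) already gives a strictly smaller sum of $2^{k-1}$-th powers than $(2,6,\dots,6,5)$ once $k\geq 2$. One must instead exploit positive-definiteness directly via a case-split on the independent set $V_1=\{i:A_{ii}=1\}$ in the graph of $A$. If $V_1=\emptyset$ then $c_i\geq 5$ at endpoints and $\geq 6$ at interior vertices, comfortably beating the bound; if $V_1$ consists of a single endpoint the extremal pattern is forced and equality holds; if $V_1$ contains an interior vertex $j$, the $3\times 3$ principal-minor inequality $(A_{j-1,j-1}-1)(A_{j+1,j+1}-1) > 1$ (obtained when the adjacent off-diagonals equal $\pm 1$) forces one of $A_{j\pm 1,j\pm 1}$ to be at least $3$, so the corresponding $c_i$ jumps from $6$ to $\geq 10$, overcompensating the saving from $c_j\geq 3$; larger $|V_1|$ simply stacks these PD-induced enlargements. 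Verifying that these enlargements dominate the savings under every convex $x\mapsto x^{2^{k-1}}$ simultaneously is the technical heart of the argument.
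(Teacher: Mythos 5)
Your argument for the $Tr_2$ bound is complete and in fact takes a different, more economical route than the paper: where the paper fixes a putatively extremal tridiagonal matrix with $Tr(A)=2n-1$ and runs a three-way case analysis on its diagonal multiset, you get $\sum_i A_{ii}^2\geq 4n-3$ directly from $Tr(A)\geq 2n-1$ via Cauchy--Schwarz plus integrality, and add $2(n-1)$ from the guaranteed super/sub-diagonal products $A_{i,i+1}A_{i+1,i}\geq 1$. That is correct (for $\mathcal{T}$ the diagonal entries are $\sqrt{m_i}$ with $\sum\sqrt{m_i}\geq 2n-1$, and the same Cauchy--Schwarz step still yields $\sum m_i\geq (2n-1)^2/n$, hence $\geq 4n-3$), and it avoids the paper's unargued reduction to the extremal matrix. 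Your reduction of $Tr_{2^k}(A)$ to $\sum_i c_i^{2^{k-1}}$ with $c_i=(A^2)_{ii}$ via Schur--Horn majorization also reproduces exactly the paper's intermediate inequality, which the paper obtains instead by induction on $k$ from $(A^{2^k})_{ii}\geq ((A^{2^{k-1}})_{ii})^2$.

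The genuine gap is the last step for $k\geq 2$: you must show $\sum_i c_i^{2^{k-1}}\geq 2^{2^{k-1}}+(n-2)6^{2^{k-1}}+5^{2^{k-1}}$ over all admissible $A$, and you stop exactly there. You correctly diagnose why convexity alone cannot close it (the integer vector $(5,5,5,5,5,6,\dots,6)$ has the same sum $6n-5$ but a strictly smaller power sum, since $2^r+3\cdot 6^r-4\cdot 5^r>0$ for $r\geq 2$), so positive-definiteness must be invoked to rule out such configurations. But your proposed case split on $V_1=\{i:A_{ii}=1\}$ is only sketched: it implicitly assumes the graph of $A$ is a path (an ``interior vertex'' with exactly two neighbours and adjacent off-diagonal products equal to $1$), it does not treat vertices of higher degree or off-diagonal products exceeding $1$, and the decisive comparison --- that each PD-forced enlargement of some $c_i$ from $6$ to $\geq 10$ outweighs the saving from $c_j\geq 3$ simultaneously for every exponent $r=2^{k-1}$, and that these local trades compose correctly when $|V_1|>1$ --- is asserted rather than proved. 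Since this optimization over the constrained integer configurations is where the entire content of the $k\geq 2$ statement lives (the paper spends its Cases I(a)--(c) on precisely this, albeit only for its chosen path matrices), the proposal as written does not establish the second bullet of the theorem.
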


From the fact that for any $n\in \mathbb{N}$ there exist matrix $A$ of order $n$ with $Tr_2 (A) = 6n-5 $  we solve the analogue of 'Schur - Siegel - Smyth trace problem' for \textit{Trace $2$ measure}
of matrices defined in \ref{th1} and proved the following --

\begin{thma}
    For any matrix $A$ which is one of the above mentioned two types (mentioned in \ref{th1} )

\begin{itemize}

    \item The smallest limit point of $\overline{Tr_{2}(A)}$ is $6$.

    \item The smallest limit point of $\overline{Tr_{2^k}(A)}$ is greater than or equals to $5^{2^{k-1}}$.
    
\end{itemize}
    
\end{thma}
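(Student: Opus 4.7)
The plan is to derive both bullets almost directly from Theorem~\ref{th1}, with one auxiliary observation: for every fixed order $n$, the set
$$ S^{(n)} := \{\overline{Tr_{2^k}(A)} : A \in \mathcal{S}\cup\mathcal{T},\ A \text{ has order } n \} $$
has no finite accumulation point. I would establish this by showing that $Tr_{2^k}(A)$ is always a positive integer: for $A \in \mathcal{S}$ this is immediate, as $A^{2^k}$ has integer entries, and for $A \in \mathcal{T}$ it follows by expanding the trace as a sum over closed walks of length $2^k$ in the weighted digraph on $\{1,\ldots,n\}$ defined by $A$ and invoking the cycle condition, which is precisely designed to force the square-root contributions around each closed walk to combine into a rational integer. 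Consequently $S^{(n)} \subseteq \frac{1}{n}\mathbb{Z}_{>0}$ is discrete, and any finite limit point of $S_{2^k} := \bigcup_n S^{(n)}$ must arise from a sequence whose orders tend to infinity.

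Given this, the argument is short. Let $L$ be a finite limit point of $S_{2^k}$ and pick distinct $\overline{Tr_{2^k}(A_j)} \to L$; by the preceding paragraph I may pass to a subsequence with $n_j := \deg A_j \to \infty$. Theorem~\ref{th1} then yields
\[
\overline{Tr_{2^k}(A_j)} \;\geq\; \frac{2^{2^{k-1}} + (n_j - 2)\,6^{2^{k-1}} + 5^{2^{k-1}}}{n_j},
\]
whose right-hand side tends to $6^{2^{k-1}}$ as $j \to \infty$. Hence $L \geq 6^{2^{k-1}} \geq 5^{2^{k-1}}$. For $k = 1$ this specialises to $L \geq 6$, the lower half of the first bullet; for general $k \geq 1$ this is exactly the second bullet.

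To complete the first bullet I would exhibit $6$ itself as a limit point of $S_2$, using the sharpness assertion for Theorem~\ref{th1} stated in the paragraph just before the theorem: for every $n$ there exists a matrix $A_n$ of order $n$ attaining $Tr_2(A_n) = 6n - 5$. The values $\overline{Tr_2(A_n)} = 6 - 5/n$ are pairwise distinct, lie strictly below $6$, and converge to $6$, so $6$ is a limit point of $S_2$; combined with the previous paragraph, it is the smallest one.

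The only step in which any real work is hidden is the integrality claim $Tr_{2^k}(A) \in \mathbb{Z}$ for $A \in \mathcal{T}$ with $k \geq 2$. For $k = 1$ this is trivial from $Tr_2(A) = \sum_i a_{ii}^2 + \sum_{i \neq j} a_{ij}^2$, but for larger $k$ the closed-walk expansion contains individual summands that are genuinely irrational, and I would have to group them by cycle type before the cycle condition forces their sum into $\mathbb{Z}$. If this verification turns out to be delicate, an alternative path is available: for fixed $n$, the bound $a_{ii}^{2^k} \leq Tr_{2^k}(A)$ together with discreteness of the allowed entries forces only finitely many $A$ to give values of $\overline{Tr_{2^k}(A)}$ in any bounded window, yielding discreteness of $S^{(n)}$ without any appeal to the cycle condition. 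Either route gives the required conclusion.
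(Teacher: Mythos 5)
Your argument is correct and follows essentially the same route as the paper's proof (Corollary \ref{11}): the lower bound of Theorem \ref{1}, whose average over $n$ tends to $6^{2^{k-1}}$, forces any putative limit point below the threshold to come from matrices of bounded order; a finiteness argument for bounded order rules that out; and the matrices attaining $Tr_2 = 6n-5$ (the paper writes down the tridiagonal $M_n$ with diagonal $(1,2,\dots,2)$ and off-diagonal $-1$) exhibit $6$ as a limit point. The one place you genuinely diverge is how you get discreteness of $S^{(n)}$ for fixed $n$: your primary route, integrality of $Tr_{2^k}(A)$ for $A\in\mathcal{T}$, is not in the paper and is the only delicate step in your write-up --- for $k\ge 2$ it really does require the decomposition of the odd-multiplicity edges of a closed walk into cycles before the rational cycle condition applies, and one should also note that the definition of $\mathcal{T}_n$ only guarantees $a_{ii}^2\in\mathbb{Z}$, not $a_{ii}\in\mathbb{Z}$. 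The paper instead bounds the diagonal entries via the trace inequality and the off-diagonal entries via positive definiteness ($|m_{ij}|<\sqrt{m_{ii}m_{jj}}$), concluding that only finitely many matrices of bounded order have bounded trace; that is precisely your fallback argument, so nothing in your proof is at risk. A small bonus of your formulation: it yields that every limit point of $\overline{Tr_{2^k}}$ is at least $6^{2^{k-1}}$, which is stronger than the $5^{2^{k-1}}$ claimed in the statement.
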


On the other hand, as matrix analogues of Smyth's results on density of absolute trace and absolute trace-2 measure of totally positive algebraic integers we prove that --

\begin{thma}
    The set of --
    \begin{itemize}
        \item absolute trace of connected, positive-definite, symmetric integer matrices is dense in $[2, \infty)$.

        \item absolute trace-2 measure of connected, positive-definite, symmetric integer matrices is dense in $[6, \infty)$.
    \end{itemize}
    
\end{thma}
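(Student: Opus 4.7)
The plan is to construct a flexible family of tridiagonal matrices and tune their parameters so that the averages $\overline{Tr}$ and $\overline{Tr_2}$ densely fill the respective target intervals. For integers $d \geq 2$, $n \geq 1$ and $0 \leq m \leq n$, define the $n \times n$ symmetric integer matrix $A_{n,m,d}$ to have $1$'s on the super- and sub-diagonals and diagonal
$(\underbrace{d, \ldots, d}_{n-m}, \underbrace{d+1, \ldots, d+1}_{m})$. The graph of nonzero off-diagonal positions is the path on $n$ vertices, so $A_{n,m,d}$ is connected. Since $d \geq 2$, the matrix is weakly diagonally dominant, irreducible, and strictly dominant in the first and last rows, hence positive definite; equivalently, $A_{n,m,d}$ dominates (in the PSD order) the constant-diagonal tridiagonal matrix whose eigenvalues $d + 2\cos(k\pi/(n+1))$, $k = 1, \ldots, n$, are manifestly positive.

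For the first bullet, I would compute $Tr(A_{n,m,d}) = nd + m$, so that $\overline{Tr(A_{n,m,d})} = d + m/n$. Given $r \in [2, \infty)$ and $\varepsilon > 0$, set $d = \lfloor r \rfloor \geq 2$ and $s = r - d \in [0, 1)$; for any $n > 1/\varepsilon$ and $m = \lfloor sn \rfloor$ one obtains $|\overline{Tr(A_{n,m,d})} - r| \leq 1/n < \varepsilon$, which establishes density in $[2, \infty)$.

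For the second bullet, I would use $(A_{n,m,d}^2)_{ii} = a_i^2 + (\text{number of nonzero off-diagonal entries in row }i)$ to get
$$
Tr_2(A_{n,m,d}) = (n-m)d^2 + m(d+1)^2 + 2(n-1),
$$
hence $\overline{Tr_2(A_{n,m,d})} = d^2 + (2d+1)(m/n) + 2(n-1)/n$. Letting $n \to \infty$ with $m/n \to t \in [0,1]$ produces the limit $d^2 + 2 + (2d+1)t$, which sweeps the closed interval $[d^2 + 2, (d+1)^2 + 2]$ as $t$ varies. Since
$$
\bigcup_{d \geq 2}\,[d^2+2,\, (d+1)^2+2] \;=\; [6, \infty),
$$
any target $r \geq 6$ can be approximated by first choosing the appropriate $d$ and then rationally approximating the corresponding $t$ by $m/n$.

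The proof is essentially constructive, so the main obstacle is simply identifying the right family; once $A_{n,m,d}$ is in hand, both bullets collapse to arithmetic together with the lower bounds $\overline{Tr(A)} \geq 2 - 1/n$ and $\overline{Tr_2(A)} \geq 6 - 5/n$ that show the endpoints $2$ and $6$ are sharp. If one instead wished to prove the analogous density statements inside the narrower classes $\mathcal{S}$ and $\mathcal{T}$ of Theorem~\ref{th1}, the difficulty would shift to respecting the cycle condition while keeping the trace values flexible; but for the class of connected positive-definite symmetric integer matrices considered here, the tridiagonal construction provides all the control one needs.
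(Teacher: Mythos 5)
Your construction is correct, and it takes a genuinely different route from the paper. The paper proves both bullets by \emph{transferring} density from Smyth's theorems on totally positive algebraic integers: given $r$, it takes a sequence $\alpha_n$ of algebraic integers whose absolute trace (resp.\ absolute trace-2) tends to $r$, and then builds a tridiagonal integer matrix of the same order whose trace matches $Tr(\alpha_n)$ exactly (one adjustable diagonal entry $a_{N_n}-2(N_n-1)$) or whose $Tr_2$ matches $Tr_2(\alpha_n)$ up to a bounded error $\kappa_{N_n}\le 9$ (four adjustable diagonal entries $w_1,\dots,w_4$ produced via Lagrange's four-square theorem). Your family $A_{n,m,d}$ instead realizes density directly: the identities $Tr(A_{n,m,d})=nd+m$ and $Tr_2(A_{n,m,d})=(n-m)d^2+m(d+1)^2+2(n-1)$ are immediate from $Tr_2(A)=\sum_{i,j}a_{ij}^2$ for symmetric $A$, the positive-definiteness argument via irreducible weak diagonal dominance is sound, and the interval bookkeeping $\bigcup_{d\ge 2}[d^2+2,(d+1)^2+2]=[6,\infty)$ is correct because consecutive intervals share endpoints. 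What your approach buys is self-containedness and simplicity: you need neither Smyth's density results nor the four-square theorem, and you get explicit rates of approximation ($O(1/n)$). What the paper's approach buys is a structural point it seems to want to emphasize, namely that the matrix spectra can be made to track the trace data of actual totally positive algebraic integers; but for the density statement as literally claimed, your elementary argument suffices and is arguably cleaner. One small point worth making explicit in a write-up: for the endpoint $r=2$ (resp.\ $r=6$) your matrices with $m=0$ give $\overline{Tr}=d=2$ exactly (resp.\ $\overline{Tr_2}=6-2/n\to 6$), so the endpoints are indeed attained as limit points, consistent with the lower bounds $Tr(A)\ge 2n-1$ and $Tr_2(A)\ge 6n-5$ quoted from McKee--Yatsyna and Theorem~\ref{1}.
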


Before going to the proofs, in next section we define some concepts which are useful for this paper.

\section{Some Basics:} \label{S}

A matrix is a way of representing a morphism in the category of finite dimensional vector spaces. Here we give definitions and basic properties of some special types of matrices and the graphs and eigenvalues associated with them.

Let $A \in M_n(\mathbb{Z})$, that is, $A$ be an $n \times n$ integer matrix. For any matrix $A$ (square or not) we define the transpose of $A$ by $A^T$. 

\begin{defi}: A real square matrix $A = (a_{ij})$ of order $n > 2$ is called \textbf{sign symmetric} if

$$
a_{ij} = a_{ji} = 0 \hspace{5mm} \text{or} \hspace{5mm} a_{ij}a_{ji} > 0, \hspace{2mm} i \neq j, \hspace{5mm} i,j \in \{ 1,...n \}
$$. 

We denote 
$$\mathcal{S}_{n}^{\pm} \coloneqq \text{the set of all sign symmetric matrices of order n} 
$$
And
$$
\mathcal{S}^{\pm} \coloneqq \bigcup _{n \in \mathbb{N}} \mathcal{S}_{n}^{\pm}
$$

\end{defi}

\begin{defi}: An $n \times n$ real matrix $A = (a_{ij})$ is said to satisfy \textbf{cycle condition} if for any finite sequence $i_1, i_2,..., i_k \in \{ 1,...,n \}$ where $3 \leq k \leq n $
$$
a_{i_1i_2}a_{i_2i_3}...a_{i_ki_1} = a_{i_2i_1}a_{i_3i_2}...a_{i_1i_k}
$$
\end{defi}

\begin{defi}: An $n \times n$ real matrix $A$ is called \textbf{symmetrizable} if $A$ is sign symmetric and satisfies the cycle condition. 

Or equivalently,

if there exists a diagonal matrix $D$ with all diagonal entries are positive, such that $D^{-1}AD$ is a real symmetric matrix.
\end{defi}
\begin{defi}: An $n \times n$ real matrix $S = (s_{ij})$ is said to satisfy \textbf{rational cycle condition} if for any finite sequence  $i_1, i_2,..., i_k \in \{ 1,...,n \}$ where $3 \leq k \leq n $
$$ 
s_{i_1i_2}s_{i_2i_3}...s_{i_ki_1} \in \mathbb{Q}
$$
\end{defi}
Let us also define 

$\mathcal{T}_{n}^c \coloneqq $ Set of all  $n \times n$   real symmetric matrices with entries in $\sqrt{N_{0}}$  and satisfies rational cycle condition

Then,
$$
\mathcal{T}^{c} = \bigcup_{n \in \mathbb{N}} \mathcal{T}_{n}^{c}
$$

Where, $\sqrt{\mathbb{N}_0} \coloneqq \pm \sqrt{\mathbb{N}} \cup \{0\} = \{ \pm \sqrt{a}, 0 | a\in \mathbb{N} \} $

\begin{defi}\label{d25}
For any $A = (a_{ij}) \in \mathcal{S}_{n}^{\pm}$  a map 
$$
\phi : \mathcal{S}_{n}^{\pm} \rightarrow \mathcal{T}_{n}^c
$$
is defined by $\phi(a_{ij}) = sgn(a_{ij}) \sqrt{a_{ij}a_{ji}}$

If $A$ is a symmetrizable matrix then $\phi$ is called a \textbf{symmetrization map}.
\end{defi}

\begin{defi}: Let $A = (a_{ij})$ be an $n \times n$ real symetrizable  matrix. A \textbf{directed graph $\mathbf{G_A}$ with $\mathbf{n}$ vertices associated with the matrix $\mathbf{A}$} can be defined by the following way--
 \begin{itemize}
     \item \textit{vertex set}: Consider each column of the matrix $A$ as a vertex and give a label of each vertex from  $1$ to $n$ such that the $i$-th vertex corresponds to the $i$-th column of the matrix for all $i \in \{ 1,2 ,..., n \}$.  
    
    \item \textit{edge set}:  There exists a directed edge from vertex $i$ to $j$ if $a_{ij} \neq 0$. Thus, a set of all non-zero entries of matrix $A$ reflect an edge of the associated directed graph.
    
\end{itemize}   
    \end{defi}
    
    Here is an example.
  $$ A=   
    \begin{pmatrix}
2 & 16 & 12\\
1 & 6 & 4\\
3 & 16 & 10 \\
\end{pmatrix}
$$

is a $3 \times 3$ symetrizable integer matrix. The directed graph $G_A$ associated with $A$ is generated by the points  \{ $(2,16,12)$, $(1,6,4)$, $(3,16,10)$ \} where we denote $(2,16,12)$, $(1,6,4)$ and $(3,16,10)$ as  $1$-st, $2$-nd and $3$-rd vertex respectively.

\begin{defi}:  A matrix is said to be \textbf{connected} if any matrix $B$ congruent to $A$ is not in block diagonal form. If a matrix is connected its associated graph is also connected. 
\end{defi}

\textit{Example:} The matrix $A$ given above is an example of a connected matrix.

Now we will define the totally positive eigenvalues of a square integer matrix $A$. 

\begin{defi}: Let $P_A =$ Determinant of $(xI - A) \in \mathbb{Z}[X]$ be the characteristic polynomial of $A$. Then an eigenvalue of $A$, that is, a root of $P_A$, is called \textbf{totally positive} if all of its conjugates (which are also eigenvalues of $A$) are positive real number. 
    
\end{defi}

A matrix is said to be \textbf{positive definite} if and only if all of its eigenvalues are totally positive. 

Let us define some special types of connected, positive definite matrices here.

$$
\mathcal{S}_n = \{ A \in M_n(\mathbb{Z})| A  \text{ is symmetrizable, positive-definite with non-zero upper and lower diagonal } \} 
$$

And 

$$
\mathcal{S} \coloneqq \bigcup_{n \in \mathbb{N}} \mathcal{S}_n
$$
 
$$
\mathcal{T}_n \coloneqq \{ A \in M_n(\mathbb{R}) |  A   \text{ is positive-definite  with non-zero upper diagonal and } A \in \mathcal{T}_{n}^c \} 
$$

And,

$$
\mathcal{T} = \bigcup_{n \in \mathbb{N}} \mathcal{T}_n
$$

Now, we first define some quantities related to a totally positive algebraic integer and then analogously interpret them in matrix set-up.

\begin{defi}: Let $\alpha$ be a totally positive algebraic integer of degree $d \geq 2$ with conjugates $\alpha= \alpha_1$, $\alpha_2$,..., $\alpha_d$. For any $k \in \mathbb{N}$ we define the \textbf{Trace-$k$ measure} of $\alpha$ by
    
    $$
    {{Tr_{k}}}(\alpha) = \sum_{i=1}^{d} \alpha_{i}^{k}
    $$

\end{defi}    
\begin{defi}:    
    The arithmetic mean of $k$-th power of all conjugates of $\alpha$ is called the \textbf{absolute Trace-$k$ measure}. We denote it by

    $$
   \overline{{{Tr_{k}}}}(\alpha) = \frac{\sum_{i=1}^{d} \alpha_{i}^{k}}{d}
    $$
    \end{defi}

We can use these above mentioned quantities for totally positive algebraic integers in exactly similar way for any real square matrix $A$.

Let $\beta_1$,..., $\beta_n$ be set of all eigenvalues of the matrix $A$.

Then we define --

\begin{itemize}
    \item \textit{Trace-$k$ measure of $A$}  $\coloneqq 
    {{Tr_{k}}}(A) \coloneqq \sum_{i=1}^{n} \beta_{i}^{k}$
    
    \item  \textit{Absolute Trace-$k$ measure of $A$} $\coloneqq \overline{{{Tr_{k}}}}(A) \coloneqq \frac{\sum_{i=1}^{n} \beta_{i}^{k}}{n}$

\end{itemize}

\section{Statement and Proof of Theorem \ref{1} And Its Corollaries}

\begin{thm}\label{1}
For any matrix $A \in \mathcal{T}_n \cup \mathcal{S}_n$,
$$
Tr_{2^k} (A) \geq 2^{2^{k-1}} + (n-2)6^{2^{k-1}} +  5^{2^{k-1}}
$$
for any $k \in \mathbb{N}$. 

In particular, $Tr_2 (A) \geq 6n-5 $.
\end{thm}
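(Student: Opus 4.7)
My plan is to reduce to the symmetric case and then handle $k=1$ and $k\geq 2$ separately. For $A\in\mathcal{S}_n$, the symmetrization map of Definition~\ref{d25} gives $\phi(A)=D^{-1}AD\in\mathcal{T}_n$, similar to $A$ and hence sharing the same characteristic polynomial, so $Tr_{2^k}(A)=Tr_{2^k}(\phi(A))$ and we may assume $A\in\mathcal{T}_n$ is symmetric. Two preparatory facts are then needed. First, the McKee--Smyth trace bound gives $Tr(A)=\sum a_{ii}\geq 2n-1$. Second, each diagonal entry is a positive integer: $a_{ii}=\sqrt{k_i}$ with $k_i\in\mathbb{N}$ by positive-definiteness, while $Tr(A)\in\mathbb{Z}$ (a coefficient of $P_A\in\mathbb{Z}[x]$), and a sum of positive square roots of positive integers is integral only when every summand is rational, whence $a_{ii}\in\mathbb{Z}_{\geq 1}$.

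For the base case $k=1$, expand
$$
Tr_2(A)=\sum_{i,j}a_{ij}^2=\sum_i a_{ii}^2+2\sum_{i<j}a_{ij}^2.
$$
The non-zero upper diagonal condition gives $2\sum_{i<j}a_{ij}^2\geq 2(n-1)$. For the diagonal I apply the elementary lemma: if $a_i\in\mathbb{Z}_{\geq 1}$ satisfy $\sum a_i\geq 2n-1$, then $\sum a_i^2\geq 4n-3$. Setting $b_i:=a_i-1\in\mathbb{Z}_{\geq 0}$, one has $\sum b_i\geq n-1$ and
$$
\sum a_i^2=n+2\sum b_i+\sum b_i^2\geq n+2(n-1)+(n-1)=4n-3,
$$
using $b_i^2\geq b_i$ for non-negative integers. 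Summing, $Tr_2(A)\geq(4n-3)+2(n-1)=6n-5$.

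For $k\geq 2$, I iterate the identity $(B^2)_{ii}=\sum_j B_{ij}^2\geq B_{ii}^2$, valid for any real symmetric $B$: applying it in turn with $B=A^{2^{k-1}},A^{2^{k-2}},\ldots,A^2$ yields $(A^{2^k})_{ii}\geq c_i^{2^{k-1}}$, where $c_i:=(A^2)_{ii}=\sum_j a_{ij}^2$ is the $i$th column's squared norm. Summing over $i$ gives
$$
Tr_{2^k}(A)\;\geq\;\sum_{i=1}^n c_i^{2^{k-1}}.
$$

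It remains to prove $\sum_i c_i^{m}\geq 2^m+(n-2)6^m+5^m$ for $m=2^{k-1}$. Since $\sum a_{ii}\geq 2n-1$ forces at most one $a_{ii}=1$, I partition the analysis into three cases. If every $a_{ii}\geq 2$, then $c_i\geq 6$ for interior $i$ and $c_i\geq 5$ at endpoints, so $\sum c_i^m\geq 2\cdot 5^m+(n-2)6^m$, which already dominates the target. If the unique $a_{jj}=1$ sits at an endpoint, the column sums are componentwise bounded below by a permutation of $(2,6,\ldots,6,5)$, matching the target exactly. The delicate case is $a_{jj}=1$ with $j$ interior: positive-definiteness of the principal $3\times 3$ minor at $\{j-1,j,j+1\}$ forces either some $a_{j\pm 1,j\pm 1}\geq 3$ (pushing the adjacent $c_{j\pm 1}$ up to $\geq 10$ or $\geq 11$) or a non-zero $a_{j-1,j+1}$ (adding $1$ to each of $c_{j-1}$ and $c_{j+1}$). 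In every sub-configuration the neighbouring inflation is enough to offset the small $c_j\geq 3$. I anticipate the main obstacle is verifying that these sub-configuration bounds still dominate $2^m+(n-2)6^m+5^m$; this reduces to explicit inequalities like $10^m+3^m\geq 2^m+6^m$ and $2\cdot 7^m+3^m+5^m\geq 2^m+3\cdot 6^m$, all of which can be checked at $m=1$ by hand and extended to every $m=2^{k-1}\geq 2$ by an easy monotonicity argument.
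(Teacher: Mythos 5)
Your reduction to $\mathcal{T}_n$ via the symmetrization map and your key inequality $Tr_{2^k}(A)\ge\sum_i\bigl(\sum_j a_{ij}^2\bigr)^{2^{k-1}}$, obtained by iterating $(B^2)_{ii}\ge B_{ii}^2$, coincide with the paper's. For $k=1$, however, your argument is genuinely different and cleaner: the paper passes to an extremal tridiagonal matrix of trace exactly $2n-1$ and runs a three-way case analysis on the value of $a_{nn}$, tracking how many diagonal entries equal $1$, equal $2$, or exceed $2$, whereas you simply split $Tr_2(A)=\sum_i a_{ii}^2+2\sum_{i<j}a_{ij}^2$, extract $2(n-1)$ from the non-zero superdiagonal, and get $\sum_i a_{ii}^2\ge 4n-3$ from the substitution $b_i=a_{ii}-1$ together with $b_i^2\ge b_i$. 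That is a complete and correct proof of $Tr_2(A)\ge 6n-5$, modulo one point you should make explicit: the integrality of $Tr(A)$, hence of each $a_{ii}$, rests on the characteristic polynomial of a matrix in $\mathcal{T}_n$ lying in $\mathbb{Z}[x]$, which is exactly where the rational cycle condition enters.

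The $k\ge 2$ part has a genuine gap: the claim that $\sum_i a_{ii}\ge 2n-1$ forces at most one $a_{ii}=1$ is false. The path matrix $\left(\begin{smallmatrix}1&-1&0\\-1&3&-1\\0&-1&1\end{smallmatrix}\right)$ is positive definite (leading principal minors $1,2,1$), lies in $\mathcal{S}_3\cap\mathcal{T}_3$, has trace $5=2n-1$, and has two diagonal entries equal to $1$; in general one diagonal entry equal to $2+M$ can be offset by $M+1$ diagonal entries equal to $1$, which is precisely the configuration the paper's bookkeeping with $m$, $l_i$ and $M=\sum_i l_i$ is designed to handle. Your trichotomy therefore misses every matrix with several diagonal $1$'s, and the ``delicate case'' compensation argument (one small $c_j$ offset by inflated neighbours) would have to be redone for clusters of $1$'s distributed along the path. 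Note also that convexity from the $k=1$ bound alone cannot close this: for $m=2$ the equidistributed point $c_i=(6n-5)/n$ gives $\sum_i c_i^2=36n-60+25/n$, which is below the target $36n-43$, so some integer case analysis of the kind the paper performs is unavoidable in the $k\ge2$ regime.
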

\begin{proof}: We first prove the inequality on the lower bound of trace $2^k$-measure for matrices in $\mathcal{T}_n$

Case I: Let the multi-set of eigenvalues of $A\in \mathcal{T}_n$ is $\{\lambda_1,...,\lambda_n\}$

First, we show that for any square matrix $A$ with the multi-set of eigenvalues $\{\lambda_1,...,\lambda_n\}$ 

$$
Tr(A^{2^k}) = \sum_{i=1}^n \lambda_i^{2^k}
$$
From Schur Triangularization \cite{Sch} theorem

$$
A = UTU^{-1} 
$$
For some triangular matrix $T$ and unitary matrix $U$. 

Then
$$
A^{2^k} = UT^{2^k}U^{-1}
$$

Since the eigenvalues of a triangular matrix are diagonal elements repeated according to multiplicity and the multi-set of eigenvalues is preserved under similarity therefore
\begin{equation}\label{eq41}
Tr(A^{2^k}) = Tr(T^{2^k})  = \sum_{i=1}^n \lambda_i^{2^k}
\end{equation}
As the diagonal matrix corresponding to $T$ is $diag(T) = (\lambda_1 , \lambda_2,  ... \lambda_n) \implies diag(T)^{2^k} = (\lambda_1^{2^k},  \lambda_2^{2^k},  ... \lambda_n^{2^k})$.

Now we show that 

\begin{equation}
Tr(A^{2^k}) \geq \sum_{i=1}^n (\sum_{j=1}^n a_{ij}^2)^{2^{k-1}}
\end{equation}
For a square symmetric real matrix $A$ and any positive integer $k$.

For $k=1$ we show that 
$$ 
(A^2)_{ii} = \sum _{j=1}^n a_{ij}^2  \hspace{5mm} \text{where} \hspace{2mm}i = 1,...,n
$$ 
and 
$$
Tr(A^2) \geq \sum_{i=1}^n \left(\sum_{j=1}^n a_{ij}^2\right)
$$
Actually for $k=1$ the equality holds.

For any square symmetric matrix $A = (a_{ij})$ where $i,j \in \{1,...n\}$ 
$$
A^2 = \left(\sum_{k=1}^n a_{ik} a_{kj}\right)_{ij}
$$
Then the diagonal elements of $A^2 = \sum_{i,j=1}^n a_{ij}a_{ji} = \sum_{i,j=1}^n a_{ij}^2 $ (As $A$ is symmetric matrix) and $Tr(A^2) = \sum_{i=1}^n \sum_{j=1}^n a_{ij}^2$

For $k=2$ we now show that 

$$ 
(A^4)_{ii} = \left(\sum _{j=1}^n a_{ij}^2 \right)^2  \hspace{5mm} \text{where} \hspace{2mm}i = 1,...,n
$$ 
And 
$$
Tr(A^4) \geq \sum_{i=1}^n \left(\sum_{j=1}^n a_{ij}^2\right)^2
$$
We know that, $ (A^2)_{ii} = \sum _{j=1}^n a_{ij}^2 = a'_{ii} \hspace{2mm} (\text{say})  \hspace{5mm} \text{where} \hspace{2mm}i = 1,...,n $

Then $ (A^4)_{ii} = \sum _{j=1}^n (a'_{ij})^2 = (a'_{ii})^2 + \sum_{i,j =1 i\neq j}^n (a'_{ij})^{2} \geq (a'_{ii})^2 = (\sum _{j=1}^n a_{ij}^2)^2 $

Let us assume that the result is true for $(A^{2^{k-1}})$ for any square symmetric real matrix $A$ and any positive integer $k > 1$.

Take $B = A^{2^{k-1}}$ then $Tr(A^{2^k})= Tr(B^2) = \sum_{i=1}^n b_{ii}^2$ where $B = (b_{ij})_{1 \leq i,j \leq n}$.

By induction hypothesis, 
$$
b_{ii} \geq \left(\sum _{j=1}^n a_{ij}^2\right)^{2^{k-2}}  
$$
This implies,  
$$
(b_{ii})^2 \geq \left(\sum _{j=1}^n a_{ij}^2\right)^{2^{k-1}}  
$$
Therefore, $ Tr(A^{2^k}) = Tr(B^2) \geq \sum_{i=1}^n \left(\sum_{j=1}^n a_{ij}^2\right)^{2^{k-1}}$. Then from equation \eqref{eq41} we can conclude that, 
$$
Tr_{2^k}(A) \geq \sum_{i=1}^n \left(\sum_{j=1}^n a_{ij}^2\right)^{2^{k-1}}
$$

Let us choose $A  = (a_{ij})_{1 \leq i,j \leq n}\in \mathcal{T}_n$ such that the graph $T_A$ of $A$ is a tree, $Tr(A) = 2n - 1$ and upper diagonal entries of $A$ are $1$ or $-1$. Since $Tr(A)= 2n - 1$ and all diagonals are positive therefore atleast one diagonal entry is $1$. Without loss of generality we assume that $ a_{11}= 1$. From Proposition $18$ of \cite{SM} we obtain that for any matrix $B = (b_{ij}) \in \mathcal{T}_n$, $Tr(B) \geq 2n-1$ and $b^2_{i i+1} = b^2_{i+1 i} \geq 1 $ for $i = 1,..,n-1$ therefore for estimating a lower bound of \textit{Trace $2^k$ measure} of matrices in $\mathcal{T}_n$ we choose $A\in \mathcal{T}_n$ satisfying these conditions.

Let $N_1$, $N_2$,..., $N_m$ are precisely all diagonal entries of $A$ which are greater than $2$. We can write $N_i = 2 + l_i$ for $i = 1,..,m$. Then we will show that exactly $l_1 + l_2 +...+ l_m +1$ number of diagonal entries of $A$ are $1$ and $n - (m + l_1 + l_2 +...+ l_m +1)$ number of diagonal entries of $A$ are $2$.

Let $l$ number of diagonal entries be $1$. Since there are precisely $m$ number of diagonal entries greater than $2$ therefore we can say exactly $n - (l+m)$ number of diagonal entries are $2$. Then
$$
Tr(A) = 2n - 1 \implies \sum_{i=1}^m N_i + l + 2 \{n - (l + m) \} = 2n - 1 \implies l = \sum_{i=1}^m l_i + 1
$$

Let us denote $l_1 + l_2 +...+ l_m \coloneqq M$

Since $Tr(A)= 2n - 1$ and all diagonals are positive therefore atleast one diagonal entry is $1$.

Then we get three cases --

Case I(a): Fix $a_{nn} = 2$

Since there are exactly $M+1$ number of rows with diagonal entries $1$ and first row is one of them therefore for these rows we have 
\begin{equation}\label{eq11}\
\begin{split}
&\left(\sum_{j=1}^n a_{1 j}^2\right)^{2^{k-1}} + M \left(\sum_{j=1}^n a_{i j}^2 \right)^{2^{k-1}} \\
&=  \left(\sum_{j=1}^n a_{1 1}^2+ a_{1 2}^2 \right)^{2^{k-1}} + M  \left(\sum_{j=1}^n a_{i i}^2+ a_{i i+1}^2 + a_{i i-1}^2\right)^{2^{k-1}}
= 2^{2^{k-1}}  + M 3^{2^{k-1}}
\end{split}
\end{equation}
here $i \in \{2,..n -1 \} $ such that $a_{ii} = 1$.

There are exactly $m$ number of rows with diagonal entries greater than $2$. For these rows we have
\begin{equation}\label{eq12}
\left(\sum_{i}\sum_{j=1}^n a_{i j}^2\right)^{2^{k-1}} = \left(\sum_{i}\sum_{j=1}^n a_{i i}^2+ a_{i i+1}^2 + a_{i i-1}^2\right)^{2^{k-1}} = \sum_{k=1}^m \left( N_{k}^2 + 2 \right)
\end{equation}
here $i \in \{2,..n -1 \} $ such that $a_{ii} > 2$.

Also, there exists exactly $\{n - (m+M+1)\}$ number of rows with diagonal entries  $2$ and $n$-th row is one of them. For these rows we have
\begin{equation}\label{eq13}
\begin{split}
&\left( n - (m+M+1) \right) \left(\sum_{j=1}^n a{i j}^2 \right)^{2^{k-1}} + \left(\sum_{j=1}^n a_{n j}^2\right)^{2^{k-1}}\\
&= \left( n - (m+M+1) \right) \left(\sum_{j=1}^n a_{i i}^2+ a_{i i+1}^2 + a_{i i-1}^2\right)^{2^{k-1}} + \left(\sum_{j=1}^n a_{n n}^2+ a_{n-1 n}^2 \right)^{2^{k-1}} \\
&= \left( n - (m+M+1) \right) 6^{2^{k-1}}+ 5^{2^{k-1}}
\end{split}
\end{equation}
here $i \in \{2,..n -1 \} $ such that $a_{ii} = 2$.

Finally, adding equation \ref{eq11},\ref{eq12},\ref{eq13} we get --
\begin{equation}
\begin{split}
    Tr_{2^k}(A) &=  2^{2^{k-1}}  + 3^{2^{k-1}} M + \sum_{k=1}^m \left( N_{k}^2 + 2 \right)^{2^{k-1}} + \left( n - (m+M+1) \right) 6^{2^{k-1}}+ 5^{2^{k-1}}\\
    &= \left[2^{2^{k-1}} + (n-2)6^{2^{k-1}} +  5^{2^{k-1}} \right] + \left[ 3^{2^{k-1}} M + \sum_{k=1}^m \left( N_{k}^2 + 2 \right)^{2^{k-1}} - \left( M+m \right) 6^{2^{k-1}} \right]
    \end{split}
    \end{equation}\label{eq1}

Now we will show that for any $m \in \mathbb {N}$ 
$$
 3^{2^{k-1}} M + \sum_{k=1}^m \left( N_{k}^2 + 2 \right)^{2^{k-1}} - \left( M+m \right) 6^{2^{k-1}}  > 0
$$
by the following way -

\begin{align*}
 &3^{2^{k-1}} M + \sum_{i=1}^m \left( N_{i}^2 + 2 \right)^{2^{k-1}} - \left( M+m \right) 6^{2^{k-1}}\\
 &=  3^{2^{k-1}}\sum_{i=1}^m l_i  + \sum_{i=1}^m \left( (l_i + 2 )^2 + 2 \right)^{2^{k-1}} - \sum_{i=1}^m (l_i + 1 )6^{2^{k-1}}\\
 &= \sum_{i=1}^m l_i \left( 3^{2^{k-1}} -  6^{2^{k-1}} \right) - \sum_{i=1}^m  6^{2^{k-1}} + \sum_{i=1}^m \left( l_{i}^2 + 4 l_{i} + 6 \right)^{2^{k-1}}\\
 &\geq -3^{2^{k-1}}\sum_{i=1}^m l_i \left( 2^{2^{k-1}} - 1 \right) - \sum_{i=1}^m 6^{2^{k-1}} + \sum_{i=1}^m \left( l_{i}^4 + 8l_{i}^3 + 28 l_{i}^2 + 48 l_{i} +36 \right)^{2^{k-2}}\\
 &\geq  -3^{2^{k-1}}\sum_{i=1}^m l_i \left( 2^{2^{k-1}} - 1 \right) - \sum_{i=1}^m 6^{2^{k-1}} + \sum_{i=1}^m \left( 37 l_{i}^2 + 36 \right)^{2^{k-2}} \hspace{2mm}(\text{Using $l_{i}^4 \geq l_{i}^2$ and  $l_{i}^3 \geq l_{i}^2$  })\\
 &\geq  -3^{2^{k-1}}\sum_{i=1}^m l_i \left( 2^{2^{k-1}} - 1 \right) - \sum_{i=1}^m 6^{2^{k-1}} + \sum_{i=1}^m \left( 37 l_{i}^2 \right)^{2^{k-2}} + \sum_{i=1}^m \left( 36 \right)^{2^{k-2}} \\
 &\geq 3^{2^{k-1}}\sum_{i=1}^m l_i - \sum_{i=1}^m 6^{2^{k-1}}l_i - \sum_{i=1}^m 6^{2^{k-1}} + \sum_{i=1}^m (37l_{i}^2)^{2^{k-2}} + \sum_{i=1}^m 6^{2^{k-1}} \hspace{2mm} (\text{Using $ (37l_{i}^2)^{2^{k-2}} >  6^{2^{k-1}}l_i  $ })\\
 &>  3^{2^{k-1}}\sum_{i=1}^m l_i > 0
\end{align*}

Case I(b): Fix $a_{nn} = 1$ 

Since there are exactly $M+1$ number of rows with diagonal entries $1$ and first and $n$-th rows are among them therefore for these rows we have 
\begin{equation}\label{eq21}\
\begin{split}
&\left(\sum_{j=1}^n a_{1 j}^2 \right)^{2^{k-1}} + (M-1) \left(\sum_{j=1}^n a_{i j}^2 \right)^{2^{k-1}} + \left(\sum_{j=1}^n a_{n j}^2 \right)^{2^{k-1}}\\
&=  \left(\sum_{j=1}^n a_{1 1}^2+ a_{1 2}^2 \right)^{2^{k-1}} + (M - 1)  \left(\sum_{j=1}^n a_{i i}^2+ a_{i i+1}^2 + a_{i i-1}^2\right)^{2^{k-1}} + \left(\sum_{j=1}^n a_{n n}^2 + a_{n-1 n}^2 \right)^{2^{k-1}}\\
&= 2 (2^{2^{k-1}})  + (M - 1) 3^{2^{k-1}}
\end{split}
\end{equation}
here $i \in \{2,..n -1 \} $ such that $a_{ii} = 1$.

There are exactly $m$ number of rows with diagonal entries greater than $2$. For these rows we have
\begin{equation}\label{eq22}
\left(\sum_{i}\sum_{j=1}^n a_{i j}^2\right)^{2^{k-1}} = \left(\sum_{i}\sum_{j=1}^n a_{i i}^2+ a_{i i+1}^2 + a_{i i-1}^2\right)^{2^{k-1}} = \sum_{i=1}^m \left( N_{i}^2 + 2 \right)^{2^{k-1}} 
\end{equation}
here $i \in \{2,..n -1 \} $ such that $a_{ii} > 2$.

Also, there exists exactly $\{n - (m+M+1)\}$ number of rows with diagonal entries  $2$. For these rows we have
\begin{equation}\label{eq23}
\begin{split}
&\left( n - (m+M+1) \right) \left(\sum_{j=1}^n a{i j}^2 \right)^{2^{k-1}} \\
&= \left( n - (m+M+1) \right) \left(\sum_{j=1}^n a_{i i}^2+ a_{i i+1}^2 + a_{i i-1}^2\right)^{2^{k-1}} \\
&= \left( n - (m+M+1) \right) 6^{2^{k-1}}
\end{split}
\end{equation}
here $i \in \{2,..n -1 \} $ such that $a_{ii} = 2$.
Finally, by adding equations \ref{eq21},\ref{eq22},\ref{eq23} we get --
\begin{equation}
\begin{split}
   & Tr_{2^k}(A) \\
    &= 2( 2^{2^{k-1}})  + 3^{2^{k-1}} (M-1) + \sum_{i=1}^m \left( N_{i}^2 + 2 \right)^{2^{k-1}} + \left( n - (m+M+1) \right) 6^{2^{k-1}}\\
    &= \left[2^{2^{k-1}} + (n-2)6^{2^{k-1}} +  5^{2^{k-1}} \right] + \\
   & \left[ 3^{2^{k-1}} M + \sum_{i=1}^m \left( N_{i}^2 + 2 \right)^{2^{k-1}} - \left( M+m \right) 6^{2^{k-1}} +  2^{2^{k-1}} - 3^{2^{k-1}} + 6^{2^{k-1}} - 5^{2^{k-1}}   \right]
    \end{split}
    \end{equation}\label{eq2}
Then again for any $m \in \mathbb{N}$
\begin{align*}
    &\left[ 3^{2^{k-1}} M + \sum_{i=1}^m \left( N_{i}^2 + 2 \right)^{2^{k-1}} - \left( M+m \right) 6^{2^{k-1}} +  2^{2^{k-1}} - 3^{2^{k-1}} + 6^{2^{k-1}} - 5^{2^{k-1}}   \right]\\
    &= 3^{2^{k-1}} M + \sum_{i=1}^m \left( N_{i}^2 + 2 \right)^{2^{k-1}} - \left( M+m \right) 6^{2^{k-1}} + \left[(6^{2^{k-1}} - 5^{2^{k-1}}) -  (3^{2^{k-1}} - 2^{2^{k-1}})    \right] > 0\\
\end{align*}
Since both $3^{2^{k-1}} M + \sum_{i=1}^m \left( N_{i}^2 + 2 \right)^{2^{k-1}} - \left( M+m \right) 6^{2^{k-1}} > 0 $ and $\left[(6^{2^{k-1}} - 5^{2^{k-1}}) -  (3^{2^{k-1}} - 2^{2^{k-1}})\right] > 0$. 

Case I(c):
 Fix $a_{nn} = N_m$ 

Since there are exactly $M+1$ number of rows with diagonal entries $1$ and first row is one of them therefore for these rows we have 
\begin{equation}\label{eq31}\
\begin{split}
&\left(\sum_{j=1}^n a_{1 j}^2\right)^{2^{k-1}} + M \left(\sum_{j=1}^n a_{i j}^2 \right)^{2^{k-1}} \\
&=  \left(\sum_{j=1}^n a_{1 1}^2+ a_{1 2}^2 \right)^{2^{k-1}} + M  \left(\sum_{j=1}^n a_{i i}^2+ a_{i i+1}^2 + a_{i i-1}^2\right)^{2^{k-1}}
= 2^{2^{k-1}}  + M 3^{2^{k-1}}
\end{split}
\end{equation}
here $i \in \{2,..n -1 \} $ such that $a_{ii} = 1$.

There exists exactly $\{n - (m+M+1)\}$ number of rows with diagonal entries  $2$ . For these rows we have
\begin{equation}\label{eq32}
\begin{split}
&\left( n - (m+M+1) \right) \left(\sum_{j=1}^n a{i j}^2 \right)^{2^{k-1}} \\
&= \left( n - (m+M+1) \right) \left(\sum_{j=1}^n a_{i i}^2+ a_{i i+1}^2 + a_{i i-1}^2\right)^{2^{k-1}} \\
&= \left( n - (m+M+1) \right) 6^{2^{k-1}}
\end{split}
\end{equation}
here $i \in \{2,..n -1 \} $ such that $a_{ii} = 2$.

Also there are exactly $m$ number of rows with diagonal entries greater than $2$ and $n$-th row is one of them. For these rows we have
\begin{equation}\label{eq33}
\begin{split}
&\left(\sum_{i}\sum_{j=1}^n a_{i j}^2\right)^{2^{k-1}} + \left(\sum_{j=1}^n a_{n j}^2\right)^{2^{k-1}} \\
&= \left(\sum_{i}\sum_{j=1}^n a_{i i}^2+ a_{i i+1}^2 + a_{i i-1}^2\right)^{2^{k-1}}  + \left(\sum_{j=1}^n a_{n n}^2+ a_{n-1 n}^2 \right)^{2^{k-1}} \\ 
&= \sum_{i=1}^m-1 \left( N_{i}^2 + 2 \right)^{2^{k-1}} + \left( N_{m}^2 + 1 \right)^{2^{k-1}}
\end{split}
\end{equation}
here $i \in \{2,..n -1 \} $ such that $a_{ii} > 2$.

Finally, adding equations \ref{eq31},\ref{eq32},\ref{eq33} we get --
\begin{equation}
\begin{split}
   & Tr_{2^k}(A) \\
    &= 2^{2^{k-1}}  + M 3^{2^{k-1}} + \left( n - (m+M+1) \right) 6^{2^{k-1}} +  \sum_{i=1}^m-1 \left( N_{i}^2 + 2 \right)^{2^{k-1}} + \left( N_{m}^2 + 2 \right)^{2^{k-1}} \\
    &= \left[2^{2^{k-1}} + (n-2)6^{2^{k-1}} +  5^{2^{k-1}} \right] \\
   & \left[ 3^{2^{k-1}} M + \sum_{i=1}^{m-1} \left( N_{i}^2 + 2 \right)^{2^{k-1}} +\left( N_{m}^2 + 1 \right)^{2^{k-1}}  - \left( M+m \right) 6^{2^{k-1}}  + 6^{2^{k-1}} - 5^{2^{k-1}}   \right]
    \end{split}
    \end{equation}\label{eq3}
Then again for any $m \in \mathbb{N}$
\begin{align*}
    &\left[ 3^{2^{k-1}} M + \sum_{i=1}^{m-1} \left( N_{i}^2 + 2 \right)^{2^{k-1}} +\left( N_{m}^2 + 1 \right)^{2^{k-1}} - \left( M+m \right) 6^{2^{k-1}} + 6^{2^{k-1}} - 5^{2^{k-1}}   \right]\\
    &= 3^{2^{k-1}} \sum_{i=1}^{m-1} l_i + \sum_{i=1}^{m-1} \left( N_{i}^2 + 2 \right)^{2^{k-1}} - \left( \sum_{i=1}^{m-1} (l_i +1)  \right) 6^{2^{k-1}}+ \\
    &\left[(6^{2^{k-1}} - 5^{2^{k-1}}) + \left( N_{m}^2 + 1 \right)^{2^{k-1}} + 3^{2^{k-1}} l_m + (l_m +1)   6^{2^{k-1}}  \right] > 0
\end{align*}
From Case I we can say $3^{2^{k-1}} \sum_{i=1}^{m-1} l_i + \sum_{i=1}^{m-1} \left( N_{i}^2 + 2 \right)^{2^{k-1}} - \left( \sum_{i=1}^{m-1} (l_i +1)  \right) 6^{2^{k-1}}> 0 $ and 

\begin{align*}
    &\left[(6^{2^{k-1}} - 5^{2^{k-1}}) + \left( N_{m}^2 + 1 \right)^{2^{k-1}} + 3^{2^{k-1}} l_m - (l_m +1) 6^{2^{k-1}}  \right] \\
    &\geq \left[(6^{2^{k-1}} - 5^{2^{k-1}}) + \left( l_m^4 + 8l_m^3 + 26l_m^2 + 40 l_m +25 \right)^{2^{k-1}} + 3^{2^{k-1}} l_m - (l_m +1) 6^{2^{k-1}}  \right]\\
    &> \left[(6^{2^{k-1}} - 5^{2^{k-1}}) + (40 l_m)^{2^{k-2}} + 25^{2^{k-2}} + 3^{2^{k-1}} l_m - (l_m +1) 6^{2^{k-1}}  \right]\\
    &\geq 3^{2^{k-1}} l_m > 0
\end{align*}

This implies for any $m \in \mathbb{N}$ 
$$
\left[ 3^{2^{k-1}} M + \sum_{i=1}^{m-1} \left( N_{i}^2 + 2 \right)^{2^{k-1}} +\left( N_{m}^2 + 1 \right)^{2^{k-1}} - \left( M+m \right) 6^{2^{k-1}} + 6^{2^{k-1}} - 5^{2^{k-1}}   \right] > 0
$$
From the construction of $A$ we can say that for any matrix $B \in \mathcal{T}_n$ 
$$
Tr_{2^k}(B) \geq Tr_{2^k}(A) 
$$
On the other hand, we get from above three cases --
\begin{equation*}
\begin{split}
    Tr_{2^k}(A) &\geq min \Bigg\{ \left[2^{2^{k-1}} + (n-2)6^{2^{k-1}} +  5^{2^{k-1}} \right] + \left[ 3^{2^{k-1}} M + \sum_{k=1}^m \left( N_{k}^2 + 2 \right)^{2^{k-1}} - \left( M+m \right) 6^{2^{k-1}} \right],\\
    &\left[2^{2^{k-1}} + (n-2)6^{2^{k-1}} +  5^{2^{k-1}} \right] + \\
    &\left[3^{2^{k-1}} M + \sum_{i=1}^m \left( N_{i}^2 + 2 \right)^{2^{k-1}} - \left( M+m \right) 6^{2^{k-1}} +  2^{2^{k-1}} - 3^{2^{k-1}} + 6^{2^{k-1}} - 5^{2^{k-1}}\right],\\
    & \left[2^{2^{k-1}} + (n-2)6^{2^{k-1}} +  5^{2^{k-1}} \right] + \\
    &\left[ 3^{2^{k-1}} M + \sum_{i=1}^{m-1} \left( N_{i}^2 + 2 \right)^{2^{k-1}} +\left( N_{m}^2 + 1 \right)^{2^{k-1}}  - \left( M+m \right) 6^{2^{k-1}}  + 6^{2^{k-1}} - 5^{2^{k-1}}  \right]    \Bigg\} \\
    &\geq \left[2^{2^{k-1}} + (n-2)6^{2^{k-1}} +  5^{2^{k-1}} \right]
    \end{split}
\end{equation*}
And among all these cases only in the first case we can Take $m=0$ which implies $Tr_{2^k}(A) \geq \left[2^{2^{k-1}} + (n-2)6^{2^{k-1}} +  5^{2^{k-1}} \right]$ . And in the other two cases $Tr_{2^k}(A)$ is strictly greater than $\left[2^{2^{k-1}} + (n-2)6^{2^{k-1}} +  5^{2^{k-1}} \right]$. From here we can conclude that --
\begin{equation}\label{eqn3}
    Tr_{2^k}(B) \geq \left[2^{2^{k-1}} + (n-2)6^{2^{k-1}} +  5^{2^{k-1}} \right]
\end{equation}
for all $B \in \mathcal{T}_n$ 

Case II: Now we prove that the inequality on the lower bound of trace $2^k$-measure is also true for matrices in $\mathcal{S}_n$. 

By applying symmetrization map $\phi$ from definition \ref{d25} we get that every matrix $A \in \mathcal{S}_n$ is similar to some matrix $\phi(A) \in \mathcal{T}_n$. Therefore the multiset of eigenvalues of $A$ and $\phi(A)$ are the same. Hence from equation \ref{eqn3} we get
$$
Tr_{2^k}(A) = Tr_{2^k}(\phi(A)) \geq  2^{2^{k-1}} + (n-2)6^{2^{k-1}} +  5^{2^{k-1}}
$$
for all $A \in \mathcal{S}_n$.

\textit{Second Part:} Putting $k=1$ in equation \eqref{eqn3}  we get --
\begin{equation}\label{eq411}
   Tr_2(A) \geq  6n - 5 
\end{equation}

For all $A \in \mathcal{T}_n \cup \mathcal{S}_n$.
\end{proof}

\begin{cor}\label{11}
Let $U_k$ and $V_k$ be the set of absolute trace-$2^k$ measure of matrices in $\mathcal{S}$ and $\mathcal{T}$ respectively. Then the following are true --

\setlist[enumerate,1]{label={(\roman*)}}\setlist[enumerate,1]{label={(\roman*)}}
\begin{enumerate}
    \item $U_k = V_k$ $\forall k \in \mathbb{N} \cup \{0\}$. \label{c1}
    \item the smallest limit point of $U_1$ is $6$. \label{c2}
    \item the smallest limit point of $U_k$ can not be less than $5^{2^{k-1}}$ for any positive integer $k > 1$. \label{c3}
\end{enumerate}
\end{cor}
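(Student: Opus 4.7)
I will address the three parts of the corollary in turn, each leveraging the bound from Theorem \ref{1}.

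For part \ref{c1}, the inclusion $U_k \subseteq V_k$ is immediate from Definition \ref{d25}: every $A \in \mathcal{S}_n$ is similar via a positive diagonal matrix to $\phi(A) \in \mathcal{T}_n$, so the two share a spectrum and hence an absolute trace-$2^k$ measure. This is exactly the correspondence already used in Case II of the proof of Theorem \ref{1}. For the reverse inclusion, given $T \in \mathcal{T}_n$ with entries $t_{ij} = \epsilon_{ij}\sqrt{m_{ij}}$, I would construct a positive diagonal $D$ so that $B = DTD^{-1}$ is an integer matrix in $\mathcal{S}_n$. On a spanning tree of $G_T$ I would pick factorisations $m_{ij} = p_{ij}q_{ij}$ with $p_{ij},q_{ij}\in\mathbb{N}$ and propagate $d_j/d_i = \sqrt{p_{ij}/q_{ij}}$ from a root $d_1 = 1$. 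The rational cycle condition is precisely what forces the off-tree entries $\epsilon_{ij}\sqrt{m_{ij}}\,d_j/d_i$ to be rational, and refining the factorisations (or an overall integer rescaling absorbed into the equivalence class) then upgrades these to integers while preserving the integrality of the tree entries; the resulting $B$ lies in $\mathcal{S}_n$ with the same spectrum as $T$.

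For part \ref{c2}, Theorem \ref{1} yields $\overline{Tr_2}(A) \geq 6 - 5/n$ for every $A \in \mathcal{S}$ of order $n$. The remark preceding Theorem A supplies, for each $n$, a matrix attaining $Tr_2(A) = 6n-5$, so the distinct values $\{6 - 5/n : n \in \mathbb{N}\} \subset U_1$ accumulate at $6$, exhibiting $6$ as a limit point. To rule out any smaller limit point $\rho < 6$, I would argue by contradiction: a sequence of distinct values $\overline{Tr_2}(A_i) \to \rho$ satisfies $6 - 5/n_i < 6 - \tfrac{6-\rho}{2}$ for large $i$, so the orders $n_i$ are uniformly bounded. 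For each fixed $n$, the identity $Tr_2(A) = \sum_{i,j} \phi(A)_{ij}^2$ confines the symmetric matrix $\phi(A)$ to a finite collection (entries in the discrete set $\sqrt{\mathbb{N}_0}$ with bounded sum of squares), leaving only finitely many spectra and hence only finitely many values of $\overline{Tr_2}$; this contradicts the assumed infinite sequence of distinct values.

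For part \ref{c3}, the same dichotomy handles $U_k$ for $k>1$: if $\rho$ is a limit point realised by matrices of orders $n_i$, then either $n_i$ is bounded (and the finiteness argument from part \ref{c2} applies verbatim) or $n_i \to \infty$ along a subsequence, in which case Theorem \ref{1} gives
$$
\rho \;\geq\; \lim_{n\to\infty}\frac{2^{2^{k-1}} + (n-2)\,6^{2^{k-1}} + 5^{2^{k-1}}}{n} \;=\; 6^{2^{k-1}} \;\geq\; 5^{2^{k-1}},
$$
proving the claimed bound (indeed with the stronger value $6^{2^{k-1}}$).

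The main obstacle is the construction in part \ref{c1}: the rational cycle condition guarantees rational entries for $B$ at non-tree edges, but one must ensure that the factorisations chosen along the spanning tree deliver genuine \emph{integers} while respecting the cycle (symmetrizability) condition on $B$. This compatibility check is exactly what the hypothesis $T \in \mathcal{T}_n^c$ is designed to supply, and it is where the argument requires the most care.
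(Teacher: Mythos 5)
Your parts \ref{c2} and \ref{c3} follow the paper's strategy and are correct; in places they are tidier than the original. For \ref{c2} the paper also exhibits the explicit tridiagonal path matrix $M_n$ (one diagonal entry $1$, the rest $2$, off-diagonal entries $-1$) to realise $Tr_2=6n-5$, and then rules out smaller limit points exactly as you do: Theorem \ref{1} bounds the order $n$, and bounded order plus bounded trace data forces finitely many matrices. Your finiteness step, via $Tr_2(A)=\sum_{i,j}\phi(A)_{ij}^2$ with entries in the discrete set $\sqrt{\mathbb{N}_0}$, is cleaner than the paper's (which bounds diagonal entries through the absolute trace and then off-diagonal entries through positive definiteness of $2\times 2$ principal submatrices). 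For \ref{c3} your dichotomy (bounded orders give finiteness; unbounded orders give $\rho\geq\lim_n\frac{2^{2^{k-1}}+(n-2)6^{2^{k-1}}+5^{2^{k-1}}}{n}=6^{2^{k-1}}$) is a genuine improvement: it yields the stronger lower bound $6^{2^{k-1}}$ where the paper only extracts $5^{2^{k-1}}$ from essentially the same inequality. One small point to make explicit in the bounded-order branch of \ref{c3}: you need bounded $Tr_{2^k}$ to imply bounded $\sum_{i,j}a_{ij}^2$, which follows from the inequality $Tr_{2^k}(A)\geq\sum_i\bigl(\sum_j a_{ij}^2\bigr)^{2^{k-1}}\geq\sum_{i,j}a_{ij}^2$ established in the proof of Theorem \ref{1}.

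The one genuine gap is in the reverse inclusion $V_k\subseteq U_k$ of part \ref{c1}. The paper simply invokes Proposition 1 of McKee--Smyth \cite{SM}, which asserts that the symmetrization map $\phi:\mathcal{S}\rightarrow\mathcal{T}$ is surjective. You instead sketch a proof of that surjectivity, and the sketch does not close: propagating $d_j/d_i=\sqrt{p_{ij}/q_{ij}}$ along a spanning tree makes the tree entries integers, and the rational cycle condition makes the off-tree entries of $DTD^{-1}$ rational, but rationality is not integrality, and ``refining the factorisations'' is precisely the nontrivial compatibility argument --- you acknowledge this yourself. Moreover, you must also check that the resulting integer matrix $B$ is sign symmetric and satisfies the (multiplicative) cycle condition, i.e.\ actually lies in $\mathcal{S}_n$, and that positive definiteness and the nonvanishing of the sub- and super-diagonal are preserved. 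Since all of this is exactly the content of the cited proposition, the efficient fix is to cite \cite{SM} rather than reprove it; as written, your part \ref{c1} is incomplete where the paper's is not.
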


\begin{proof}: 

\textit{Part \ref{c1}:} Let $u \in U_k \implies u = \overline{Tr_{2^k}(A)}$ for some $A \in \mathcal{S}$. Applying the symmetrization map $\phi$ on $A$ we get $\phi(A) \in \mathcal{T}$. Since $\phi$ preserves the multi-set of eigenvalues of $A$ and the order of $A$, therefore -- 
$$
\overline{Tr_{2^k}(A)} = \overline{Tr_{2^k}(\phi(A))} = u
$$
This implies, $u \in V_k$.

Conversely, let $v \in V_k \implies v = \overline{Tr_{2^k}(B)}$ for some $B \in \mathcal{T}$. Since from (Proposition $1$, \cite{SM}) we know that $\phi : \mathcal{S} \rightarrow \mathcal{T}$ is surjective therefore $B = \phi(A)$ for some  $A \in \mathcal{S}$. Then again, by the same argument we get --
$$
\overline{Tr_{2^k}(B)} = \overline{Tr_{2^k}(\phi(A))} =\overline{Tr_{2^k}(A)} = v
$$
This implies, $ v \in U_k $.

\textit{Part \ref{c2}:} Consider the following $n \times n$ matrix 

$$ M_n = 
\begin{pmatrix} 1 & -1 & 0 & \cdots & 0\\
-1 & 2 & -1 & \cdots & 0 \\
0 & -1 & 2 & \cdots & 0 \\
\vdots  & \vdots  & \vdots & \ddots & \vdots  \\
0 & 0 & 0 & \cdots & 2
\end{pmatrix} 
$$

This matrix is 

\begin{itemize}

     \item \textit{symmetric, integer:} $M_n = (m_{ij})$ where $m_{nn} = 1$, $m_{ii} = 2$ for $i < n$ and $m_{i, i+1} = m_{i+1 , i} = -1$ and all other entries are $0$. Therefore by definition it follows.
     
    \item \textit{positive definite:} Since $M_n$ is a symmetric diagonally dominant (not strictly) therefore by Gershgorin's circle theorem \cite{G} eigenvalues of $M$ are all non-negative. Again, $M$ is a non-singular matrix. This implies every eigenvalue of $M$ must be positive. 
    
   \item \textit{connected:} By definition of connectedness given in section \ref{S},  $i$-th point of the lattice $G_{M_n}$ associated with the matrix $M_n$ is connected with $i+1$-th point for each $i = 1,...,n$. It means, this is a \textit{path}. 
   
\end{itemize}

   For each $n \in \mathbb{N}$ let us consider the sequence ${M_n} \in \mathcal{S} \cap \mathcal{T}$ of this form. then from the proof of Theorem \ref{1} we get --

$$
Tr_2(M_n) = \sum_{i=1}^n \lambda_i^{2^k} = \sum_{i=1}^n \left(\sum_{j=1}^n a_{ij}^2\right) = 6n - 5
$$
Then limit of $\overline{tr_2(M_n)} = \frac{6n - 5}{n}$ of this sequence is $6$.
   
   Now we show that $6$ is the smallest limit point of $U_k$ for any $k \in \mathbb{N}$. 
   
   Let us assume the contrary.
   
   Take any $\epsilon > 0$. Let $M^{'} = (m_{ij})\in \mathcal{S}\cap \mathcal{T}$ be a matrix with  $\overline{Tr_2 }(M) < 6 - \epsilon $ . But by Theorem \ref{1} 
   $$
   \overline{Tr_2 (M^{'})} \geq \frac{6n - 5}{n}
   \implies n < \frac{5}{\epsilon}
   $$
Let $n=n_0$ is fixed and let $\{ \mu_1,..., \mu_{n_0}\}$ be the multi-set of eigenvalues of $M'_n$. Also let $\mu_1,..., \mu_{r_0} \leq 1$ for some non-negative integer $r_0 < n_0$. Then, 
$$
\left(\overline{\sum_{i=1}^{n_{0}} \mu_i}\right) = \frac{\sum_{i=1}^{r_0} \mu_i + \sum_{i= r_0 + 1}^{n_0} \mu_i}{n_0} \leq \frac{r_0 + \sum_{i= r_0 + 1}^{n_0} \mu_i^2}{n_0} \leq \frac{r_0 + (6n_0 - 5 )}{n_0} < \frac{7n_0 - 5}{n_0}
$$
This bound on absolute trace gives a finite set of possibilities for diagonal entries of $M'$.
Also, if we consider any principal sub-matrix 
$$
\begin{pmatrix}
m_{ii} & m_{ij} \\
m_{ij} & m_{jj}
\end{pmatrix}
$$

of $M'$

Since $M'$ is a positive-definite matrix each of its principal sub-matrix must be the same. This implies, 
$$
|m_{ij}| < \sqrt{m_{ii}m_{jj}}
$$

It means, absolute value of each $m^n_{ij}$ is bounded by some function of diagonal entries(which is obviously bounded). Thus for each of the entries of such matrices we have only finitely many choices. Hence the number of choices of such matrices is finite and this leads to a contradiction.

\textit{Part \ref{c3}} We first show that, the smallest limit point of $U_{k}$ can not be less than $5^{2^{k-1}}$.

Let  $A\in \mathcal{S}\cap \mathcal{T}$ with $\overline{Tr_{2^k} (A)} < 5^{2^{k-1}} - \epsilon$. Then by Theorem \ref{1}
$$
\overline{Tr_{2^k} (A)} \geq   2^{2^{k-1}} + (n-2)6^{2^{k-1}} +  5^{2^{k-1}}
$$
 where  $k \in \mathbb{N}$.

This implies, 
\begin{eqnarray*}
   5^r - \epsilon=\overline{Tr_{2r} (A)} &\geq&  \frac{2^r + (n-2)6^r +  5^r}{n}
   \implies n^2 5^r > (n-1)5^r - 2^r - (n-2)6^r \geq n \epsilon
\end{eqnarray*}

where $r = 2^{k-1}$ .

Using this inequality we get $n < \frac{5^r}{\epsilon}$.This again gives a finite upper bound of absolute trace of matrix $A$ which leads to a finite set of possibilities of eigenvalues of $A$. Rest of the proof is exactly similar to the proof of part \ref{c2}.
\end{proof}

\begin{cor}\label{12}
Let 
$$
p(x) = x^n - a_1x^{n-1} + ... + (-1)^n a_0 \in \mathbb{Z}[X]
$$

be a monic, irreducible totally positive polynomial with trace $2n - 1$. Then $p(x)$ is not a characteristic polynomial of some matrix in $\mathcal{S}_n \cup \mathcal{T}_n $ if $a_2 > 4n^2 - 10n + 6 $.

In general, if we know $a_1$,..., $a_{2^k-1}$ we obtain an integer $r_k$ such that $p(x)$ is not a characteristic polynomial of a matrix in $\mathcal{S}_n \cup \mathcal{T}_n $ if $a_{2^k}  > \frac{r_k}{2^k} \hspace{5mm} \forall k \in \mathbb{N}$.

\end{cor}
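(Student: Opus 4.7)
The plan is to translate the lower bound for $Tr_{2^k}(A)$ from Theorem \ref{1} into an upper bound on the coefficient $a_{2^k}$ via Newton's identities. Suppose for contradiction that $p(x)$ is the characteristic polynomial of some $A \in \mathcal{S}_n \cup \mathcal{T}_n$, whose eigenvalues are then precisely the roots $\lambda_1, \ldots, \lambda_n$ of $p$. In that case $a_j = e_j(\lambda_1, \ldots, \lambda_n)$ is the $j$-th elementary symmetric polynomial of the $\lambda_i$, while $Tr_j(A) = \sum_i \lambda_i^j$ is the $j$-th power sum $p_j$. Newton's identity
$$p_m = e_1 p_{m-1} - e_2 p_{m-2} + \cdots + (-1)^{m-1} m \, e_m$$
then provides the bridge between the two sides.

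For the first assertion I would use $m = 2$, which gives $Tr_2(A) = a_1^2 - 2 a_2$. Substituting $a_1 = 2n - 1$ (the given trace of $p$) together with the lower bound $Tr_2(A) \geq 6n - 5$ from Theorem \ref{1} yields
$$2 a_2 = (2n - 1)^2 - Tr_2(A) \leq (2n-1)^2 - (6n - 5) = 4n^2 - 10n + 6,$$
so any $a_2$ violating this threshold contradicts the assumption that $p$ is such a characteristic polynomial; this corresponds to $r_1 = 4n^2 - 10n + 6$ in the general formulation. For arbitrary $k$ the strategy is identical. Given $a_1, \ldots, a_{2^k - 1}$, I would first iterate Newton's identity at $m = 1, 2, \ldots, 2^k - 1$ to compute each $Tr_j(A)$ with $j < 2^k$ as an explicit integer polynomial in $a_1, \ldots, a_{2^k - 1}$; integrality propagates by induction, since these identities solve directly for $p_m$ without division. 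Then Newton's identity at $m = 2^k$, using that $2^k$ is even for $k \geq 1$, rearranges to
$$2^k a_{2^k} = \sum_{i=1}^{2^k - 1} (-1)^{i-1} a_i \, Tr_{2^k - i}(A) - Tr_{2^k}(A),$$
and inserting the lower bound on $Tr_{2^k}(A)$ from Theorem \ref{1} produces an integer upper bound $r_k$ for $2^k a_{2^k}$. The condition $a_{2^k} > r_k / 2^k$ then contradicts the existence of $A$.

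The principal difficulty here is purely expositional: the integer $r_k$ has no compact closed form, since each $Tr_{2^k - i}(A)$ with $i$ not a power of two is a polynomial of rapidly increasing complexity in $a_1, \ldots, a_{2^k - 1}$. There is no genuine mathematical obstacle --- only a combinatorial explosion that the corollary leaves implicit in the phrase ``we obtain an integer $r_k$''. The integrality of $r_k$ itself is immediate, since every $p_j$ built via Newton's identities from integer $e_i$'s is an integer and the bound $2^{2^{k-1}} + (n-2) 6^{2^{k-1}} + 5^{2^{k-1}}$ is manifestly integral.
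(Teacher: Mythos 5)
Your proposal is correct and follows essentially the same route as the paper: Newton's identities convert the lower bound on $Tr_{2^k}(A)$ from Theorem \ref{1} into an upper bound on $a_{2^k}$, and the hypothesis on $a_{2^k}$ then forces $Tr_{2^k}(A)$ below that bound, a contradiction. The one substantive difference is that you correctly use $Tr_2(A) = a_1^2 - 2a_2$, whereas the paper's displayed computation drops the factor $2$ (writing $\sum_i\lambda_i^2 = (\sum_i\lambda_i)^2 - \sum_{i<j}\lambda_i\lambda_j$); your version yields the sharper threshold $a_2 > 2n^2-5n+3$, from which the stated condition $a_2 > 4n^2-10n+6$ gives the conclusion a fortiori.
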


\begin{proof}:
Let $A_p$ be a matrix in $\mathcal{S}_n \cup \mathcal{T}_n$ with characteristic polynomial $p(x)$.

\textit{Part I:}  Let us assume the contrary. Let $\{ \lambda_1, \lambda_2,..., \lambda_n \}$ = multiset of eigenvalues of $A_p$. 

Then $a_1 = \sum_{i=1}^n \lambda_i = 2n - 1$ and $a_2 = \sum_{1 \leq i < j \leq n }^n \lambda_i \lambda_j  > 4n^2 - 10n + 6 $  implies 
$$
Tr_2(A_p) = \sum_{i=1}^n (\lambda_i)^2 =\left(\sum_{i=1}^n \lambda_i \right)^2  -   \sum_{1 \leq i < j \leq n }^n \lambda_i \lambda_j < (2n - 1)^2 - ( 4n^2 - 10n + 6 )  =  6n - 5 
$$
But from Theorem \ref{1} we get, for any $A \in \mathcal{S}_n \cup \mathcal{T}_n$ then $Tr_2(A) \geq 6n - 5 $. This leads to a contradiction.

\textit{Part II:} 
 
 Let us denote $\sum_{i=1}^n \alpha_i^l = s_l$
 
 From Newton's Identity for symmetric polynomial we know that,
 
 $$
 s_1 = a_1 
 $$
 And,
 $$
 s_l = (-1)^{l-1}l a_{l} + \sum_{i=1}^{l-1} (-1)^{(l-1)+i}a_{l-i}s_i  \hspace{5mm} \text{ for } \; 1\leq l \leq 2^k-1
 $$
 
 Therefore, if we know $a_1$, $a_2$,..., $a_{2^k-1}$ then we get $s_1$, $s_2$,..., $s_{2^k-1}$.
 
 Again, from Newton's identity we get --
 
\begin{equation}\label{e1}
a_{2^k} = \frac{\sum_{i=1}^{2^k-1} (-1)^{i-1} a_{2^k-i-1}s_i + (-1)^{2^k - 1}s_{2^k}}{2^k} = \frac{\sum_{i=1}^{2^k-1} (-1)^{i-1} a_{2^k-i-1}s_i - s_{2^k}}{2^k}
\end{equation}

Let us denote $\sum_{i=1}^{2^k-1} (-1)^{i-1} a_{2^k-i-1}s_i = f_k$.

From Theorem \ref{1} we get that if $s_{2^k}$ is less than the bound, say $e_k$, given in \eqref{eq411} then $A_p$ cannot be a matrix in $\mathcal{S}_n \cup \mathcal{T}_n$. Using this in equation \ref{e1} we get that,
$$
a_{2^k} > \frac{f_k - e_k}{2^k}
$$
Taking $f_k - e_k = r_k$ we get the result.

\end{proof}

From this result we can say when an irreducible polynomial can not be characteristic polynomial of a symmetric integer connected positive definite matrix.

\section{Statements and Proofs of Theorem \ref{2} And Theorem \ref{3}}

In our next result, we have tried to extend it for minimal polynomial of a symmetric integer positive semidefinite  matrix. The matrix doesn't need to be connected.

In \cite{M}, as a corollary of Theorem 2, it is proved the following --

 \textit{An $n\times n $ complex positive semidefinite matrix of rank $r$ whose graph has $s$ connected components, whose diagonal entries are integers, and whose non-zero off-diagonal entries have modulus at least one, has trace at least} $n+r-s$.
 
 Using this result, we will prove the following Theorem.

 \begin{thm}\label{2}

Let $p_i(x)$ be monic, irreducible, totally positive polynomials for $i= 1,...,s$. If $tr(p_i) < 2 deg(p_i)$ $ \forall i$ then $m(x) = xp_1(x)...p_s(x)$ can not be written as minimal polynomial of an integer symmetric matrix $A$ of the following type:

Every connected component of $A$ is positive semi-definite but no component is positive definite.

\end{thm}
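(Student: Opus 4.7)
The plan is to argue by contradiction and apply the Matic inequality quoted just above the theorem to each connected component of $A$ separately. Suppose such an $A$ exists. Since $A$ is real symmetric it is diagonalizable, so its minimal polynomial $m$ must be squarefree; combined with total-positivity of each $p_i$ (which forces $p_i(0)\neq 0$), this shows $x,p_1,\dots,p_s$ are pairwise coprime. Write $A=A_1\oplus\cdots\oplus A_t$ for the decomposition into connected components, with $A_j$ of order $n_j$ and rank $r_j$; note $r_j<n_j$ for every $j$ since no component is positive definite.

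Next I would unwind the spectral structure of each component. The minimal polynomial $q_j$ of $A_j$ divides $m$ and is squarefree, hence of the form $q_j(x)=x^{e_j}\prod_{i\in J_j}p_i(x)$ with $e_j\in\{0,1\}$ and $J_j\subseteq\{1,\dots,s\}$; the PSD-but-not-PD hypothesis forces $0$ to be an eigenvalue of $A_j$, so $e_j=1$. Because each $p_i$ is irreducible over $\mathbb{Q}$ and the characteristic polynomial of $A_j$ is Galois-invariant, all roots of $p_i$ (for $i\in J_j$) occur as eigenvalues of $A_j$ with a common multiplicity $c_{ji}\geq 1$. Therefore
\[
r_j=\sum_{i\in J_j}c_{ji}\deg(p_i)\qquad\text{and}\qquad Tr(A_j)=\sum_{i\in J_j}c_{ji}\,tr(p_i).
\]

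The heart of the argument is a one-line comparison. Applying the Matic inequality to each integer symmetric connected PSD matrix $A_j$ gives $Tr(A_j)\geq n_j+r_j-1$ (its off-diagonal entries are integers, so nonzero ones have modulus at least one, and its diagonal is integral). On the other hand, assuming $J_j\neq\emptyset$, the hypothesis $tr(p_i)<2\deg(p_i)$ forces
\[
Tr(A_j)=\sum_{i\in J_j}c_{ji}\,tr(p_i)<2\sum_{i\in J_j}c_{ji}\deg(p_i)=2r_j,
\]
so $n_j+r_j-1<2r_j$ and hence $r_j\geq n_j$; combined with $r_j\leq n_j$ this says $A_j$ is positive definite, contradicting the hypothesis. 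Therefore $J_j=\emptyset$ for every $j$, which forces $q_j(x)=x$ for every $j$ and hence $m=\mathrm{lcm}_j\,q_j=x$; since $s\geq 1$ this contradicts $m(x)=xp_1(x)\cdots p_s(x)$.

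I do not foresee any serious obstacle. The Matic inequality is practically tailor-made for the setting once restricted to an individual component, and the rest is the algebraic comparison of $tr(p_i)$ against $2\deg(p_i)$. The only slightly delicate bookkeeping is the Galois-equivariance step ensuring each $c_{ji}$ is well-defined as a common multiplicity for all conjugate roots of $p_i$, but this is standard for integer matrices whose characteristic polynomial lies in $\mathbb{Z}[x]$.
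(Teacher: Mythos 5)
Your proof is correct and rests on exactly the same key ingredient as the paper's: the quoted trace bound of Mitchell, $Tr \geq n + r - s$ for integer-diagonal positive semidefinite matrices, played off against the hypothesis $tr(p_i) < 2\deg(p_i)$, which caps the trace by twice the rank (the result you call the ``Matic inequality'' is Mitchell's). The only difference is that you apply the bound to each connected component separately, concluding that every component would have to be either positive definite or the zero matrix, whereas the paper applies it once to the whole matrix using that the total nullity is at least the number of components; this is a cosmetic variation rather than a genuinely different route.
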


\begin{proof}: Let us assume the contrary. Let $m(x)$ be the minimal polynomial of an integer symmetric matrix $A_m$ with $s'$ number of connected components of the given form. Then the characteristic polynomial $\chi_m(x)$ of $A_m$ is of the form
 $$
 \chi_m(x) = x^lp_1^{n_1}(x)p_2^{n_2}(x)...p_s^{n_s}(x)
 $$
 for some $n_1, n_2,..., n_s \in \mathbb{N}$.
 
Let  $ deg(p_i) = d_i$
 
 If $Tr(p_i(x)) < 2 d_i$ then $Tr(\chi_m(x)) = Tr(A_m) < \sum_{i=1}^s 2n_id_i$

 Now, $deg(\chi_m(x)) = ord(A_m) = \sum_{i=1}^s n_id_i + l = n $ (say)  

and  $rank(A_m) = \sum_{i=1}^s n_id_i = r$ (say)
 
 Here, $deg(f(x)) =$ degree of $f(x)$ for any polynomial $f(x)$ 

and $ord(A) = $ order of matrix $A$.
 
 By given condition, every connected component of $A_m$ is positive semi-definite. This implies, each component has nullity at least $1$ $\implies$ nullity of $A_m$ must be at least $s'$. That means, $l \geq s'$
 
 From here, we can say that, $tr(\chi_m(x)) < \sum_{i=1}^s 2n_id_i \leq 2\sum_{i=1}^s n_id_i + l - s' =(\sum_{i=1}^s n_id_i + l ) + \sum_{i=1}^s n_id_i - s' = n + r - s'$
 
 But this is not possible by above mentioned result in \cite{M}. Hence proved.
 
\end{proof}
In general, from this theorem we get that, if $A$ is a matrix such that nullity of $A \geq$ rank of $A$ then any polynomial $m(x)$ of the given form can not be minimal polynomial of $A$. 

Using this result we can produce a large number of counter examples of Estes-Guralnick conjecture \cite{EG}.

 From Smyth's famous result \cite{S1} on ``Schur-Siegel-Smyth" trace problem we know that the set of absolute traces of totally positive algebraic integers is dense in $[2, \infty)$. This means that for any arbitrary but fixed real number $r \geq 2$ there exists a sequence $({r_n})_{n\in \mathbb{N}}$ converges to $r$. Here, for all $n \in \mathbb{N}$,  $r_n =$ absolute trace of $\alpha_n$ for some totally positive algebraic integer $\alpha_n$. Moreover, from the same paper we also know that the set of \textit{absolute trace-2 measure} of totally positive algebraic integers is dense in $[6, \infty)$. Here we prove a matrix analogue of these two results.

For proving the part \ref{t1} of Theorem \ref{3} we will use the following theorem of Mckee and Yatsyna \cite{MY} --

\textit{Let $A$ be an $n \times n$, connected, integer, symmetric matrix. If all  the eigenvalues of $A$ are strictly positive, then $Tr(A) \geq 2n - 1$}.

And for proving the part \ref{t2} of the same Theorem we will use the Theorem \ref{1}. In particular, we will use that, for any  $n \times n$, connected, integer, symmetric matrix $A$, $Tr_2(A) \geq 6n - 5$.

\begin{thm}\label{3}
 Let $\mathcal{C}$ be the set of all integer, symmetric, connected, positive definite matrices. Then

 \setlist[enumerate,1]{label={(\roman*)}}\setlist[enumerate,1]{label={(\roman*)}}

 \begin{enumerate}
     \item $X \coloneqq \{ \overline{Tr(A)} | A \in \mathcal{C} \}$ is dense in $[2, \infty) $. \label{t1}

     \item $Y \coloneqq \{ \overline{Tr_2(A)} | A \in \mathcal{C} \}$ is dense in $[6, \infty) $. \label{t2}
 \end{enumerate}

\end{thm}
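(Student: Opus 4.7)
The plan is to prove both parts by exhibiting an explicit two-parameter family of matrices in $\mathcal{C}$ whose $\overline{Tr}$ and $\overline{Tr_2}$ values realize dense subsets of the target intervals. Let $A_{n,N}$ denote the $n \times n$ tridiagonal integer matrix ($n \geq 2$) with diagonal entries $(2, 2, \ldots, 2, N)$ and all super- and sub-diagonal entries equal to $-1$, where $N \geq 2$ is an integer parameter. First I would verify that $A_{n,N} \in \mathcal{C}$: symmetry and integrality are evident, the associated graph is the path on $n$ vertices so is connected, and positive definiteness follows from the decomposition $A_{n,N} = B_n + (N-2)E_{nn}$, where $B_n$ is the standard path matrix (all diagonals $2$, off-diagonals $-1$) whose eigenvalues $2 - 2\cos(k\pi/(n+1))$ are strictly positive, and $E_{nn}$ is the matrix with a $1$ in position $(n,n)$ and zeros elsewhere, so $(N-2)E_{nn}$ is positive semidefinite for $N \geq 2$.

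For part \ref{t1}, a direct computation gives $Tr(A_{n,N}) = 2(n-1) + N$, hence
$$
\overline{Tr(A_{n,N})} = 2 + \frac{N-2}{n}.
$$
Given $r \in [2, \infty)$ and $\epsilon > 0$, the values $(N-2)/n$ (ranging over $N \geq 2$, $n \geq 2$) form a dense subset of $[0, \infty)$; choosing $(N,n)$ so that $(N-2)/n$ lies within $\epsilon$ of $r - 2$ produces a matrix $A_{n,N}$ with $\overline{Tr(A_{n,N})}$ within $\epsilon$ of $r$, which proves density of $X$ in $[2, \infty)$.

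For part \ref{t2}, using the symmetry of $A_{n,N}$ I would compute $Tr_2(A_{n,N}) = \sum_{i,j}(A_{n,N})_{ij}^2 = 4(n-1) + N^2 + 2(n-1) = 6n - 6 + N^2$, so
$$
\overline{Tr_2(A_{n,N})} = 6 + \frac{N^2 - 6}{n}.
$$
Given $r' \in [6, \infty)$ and $\epsilon > 0$, I would approximate $r' - 6$ by $(N^2 - 6)/n$ via the choice $N = \lfloor \sqrt{6 + (r'-6)n} \rfloor$ for sufficiently large $n$; the estimate $|N - \sqrt{6 + (r'-6)n}| \leq 1$ forces $|(N^2-6)/n - (r'-6)|$ to decay at rate $O(n^{-1/2})$, which can be made arbitrarily small. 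This gives density of $Y$ in $[6, \infty)$.

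The construction itself is elementary, so the only step requiring care is the verification of positive definiteness for the family $A_{n,N}$, which the decomposition $A_{n,N} = B_n + (N-2)E_{nn}$ settles at once. The two approximation arguments are parallel, differing only in that part \ref{t2} requires integer square-root approximation rather than the direct rational approximation used in part \ref{t1}.
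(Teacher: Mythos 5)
Your proposal is correct, and it takes a genuinely different and more self-contained route than the paper. The paper proves part (i) by invoking Smyth's theorem that the absolute traces of totally positive algebraic integers are dense in $[2,\infty)$: for each target $r>2$ it takes algebraic integers $\alpha_n$ with $\overline{Tr}(\alpha_n)\to r$ and builds a tridiagonal matrix whose trace matches $Tr(\alpha_n)$; for part (ii) it similarly matches $Tr_2(\alpha_n)$ up to a bounded error, using Lagrange's four-square theorem to distribute the required quantity over four diagonal entries. You instead exhibit the explicit two-parameter family $A_{n,N}$ directly and observe that $2+(N-2)/n$ runs over all nonnegative rationals shifted by $2$, while $6+(N^2-6)/n$ approximates any $r'\geq 6$ via the integer square-root choice $N=\lfloor\sqrt{6+(r'-6)n}\rfloor$ with error $O(n^{-1/2})$. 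Your positive-definiteness argument ($A_{n,N}=B_n+(N-2)E_{nn}$ with $B_n$ positive definite and the perturbation positive semidefinite) is clean and correct, and your $Tr_2$ computation $6n-6+N^2$ checks out. What your approach buys is the elimination of both external ingredients (Smyth's density theorem and the four-square theorem); what the paper's approach buys is an explicit link between the matrix spectrum problem and the algebraic-integer spectrum problem, showing that the matrix absolute (trace-$2$) measures inherit density from the algebraic-integer case. One cosmetic remark: for $r'=6$ your formula gives $N=2$ and $\overline{Tr_2(A_{n,2})}=6-2/n$, which approaches $6$ from below; this still makes $6$ a limit point of $Y$, so the density claim is unaffected, but it is worth noting that these particular values lie just outside $[6,\infty)$.
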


\begin{proof}: 

\textit{Part\ref{t1}}: In this part we will show that the trace measure of the set of all integer, symmetric, connected, positive definite matrices is dense in $[2, \infty) $.

From the result of Mckee and Yatsyna \cite{MY} we get that $2$ is the smallest limit point of $X$. Therefore it is enough to prove the result for any real number $r > 2$.

Let $\{\alpha_n\}_{n\in \mathbb N}$ be a sequence of totally positive algebraic integers corresponding to $r$ and let $N_n$ be the number of conjugates of $\alpha_n$  for all $n\in \mathbb{N}$.

Also let 
$$
P_n = x^{N_n} - a_{N_n}x^{N_n-1} + ... + (-1)^{N_n} a_1 \in \mathbb{Z}[X]
$$

be the minimal polynomial of $\alpha_n$ for all $n\in \mathbb{N}$ .

Then the absolute trace of $\alpha_n \coloneqq r_n= \frac{a_{N_n}}{N_n}$. 

By assumption, for any $r > 2$ there exists $M\in \mathbb{N}$ such that for all $n \geq M$  $\frac{a_{N_n}}{N_n} \geq 2$. 

Now for all $n \geq M$ we construct a sequence of matrices \{$T_{n}\}_{n\in \mathbb{N}} $ of order $N_n \times N_n$ by the following way--

$$ T_{n} = 
\begin{pmatrix} a_{N_n}- 2(N_n -1) & -1 & 0 & \cdots & 0\\
-1 & 2 & -1 & \cdots & 0 \\
0 & -1 & 2 & \cdots & 0 \\
\vdots  & \vdots  & \vdots & \ddots & \vdots  \\
0 & 0 & 0 & \cdots & 2
\end{pmatrix} 
$$

Since 
$$
\frac{a_{N_n}}{N_n} \geq 2
\implies a_{N_n} - 2(N_n -1) \geq 1
$$. 

Since $T_{n}$ is a symmetric, diagonally dominant and non-singular matrix, it must be positive definite. Also, by construction it is a connected and integer matrix. This means, $T_n \in \mathcal{C}$ for all $n\in \mathbb{N}$. Now,
$$
\text{Absolute trace of} \ T_n = 
\frac{\text{sum of diagonal entries of} \ T_n}{\text{order of} \ T_n} = \frac{a_{N_n}}{N_n} = \text{absolute trace of} \ {\alpha_n}
$$

Hence, it converges to $r$ for any given $r > 2$. 

\textit{Part \ref{t2}:} In this part we will show that the trace-$2$ measure of the set of all integer, symmetric, connected, positive definite matrices is dense in $[6, \infty) $.

In part \ref{c2} of Corollary \ref{11} we have already shown that $6$ is the smallest limit point of $Y$. Therefore it is enough to prove this result for any real number $r>6$.

Let $\{\alpha_n\}_{n\in \mathbb{N}}$ be a sequence of totally positive integers corresponding to $r$ such that the degree of $\alpha_n$ is $N_n$ and the minimal polynomial of $\alpha_n$ is 
$$
P_n = x^{N_n} - a_{N_n}x^{N_n-1} + ... + (-1)^{N_n} a_1 \in \mathbb{Z}[X]
$$
Then $r_n \coloneqq \frac{Tr_2(\alpha_n)}{N_n} = \frac{a_{N_n}^2 - 2a_{N_n -1}}{N_n}$. Since $r>6$ there exists $M\in \mathbb{N}$ such that for all $n \geq M$,  $r_n >6$.

Now let us consider a sequence of matrices \{$L_{n}\}_{n\in \mathbb{N}} $ of order $N_n \times N_n$ given by --

$$ L_{n} = \{l^{(n)}_{ij}\}_{i \leq i,j \leq n} =
\begin{pmatrix} w_1 & -1 & 0 & 0 & 0 &\cdots & 0\\
-1 & w_2 & -1 & 0 & 0 & \cdots & 0 \\
0 & -1 & w_3 & -1 & 0 & \cdots & 0 \\
0 & 0 & -1 & w_4 & 0 & \cdots & 0 \\
0 & 0 & 0 & -1 & 2 & \cdots & 0 \\
\vdots  & \vdots  & \vdots & \vdots & \vdots & \ddots & \vdots  \\
0 & 0 & 0 & 0 & 0 & \cdots & 2
\end{pmatrix} 
$$
From the construction it is clear that we are considering $N_n \geq 5$. 
Now for $n < M$ we are simply putting $w_1 = 1$ and $w_2 = w_3 = w_4 = 2$. For $n \geq M$ we are choosing $w_1, w_2, w_3, w_4$ such that $0 \leq w_1 \leq w_2\leq w_3 \leq w_4 $ by the following way --

Since for any $n \geq M$,  $r_n > 6$ therefore 
$$
a_{N_n}^2 - 2a_{N_n -1} - 2(N_n -1) - 4(N_n - 4)
$$
is a positive integer greater than $18$.

From Lagrange's four square theorem we know that every positive integer can be represented as sum of four integer squares. therefore we can write 
\begin{equation}\label{eq111}
    a_{N_n}^2 - 2a_{N_n -1} - 2(N_n -1) - 4(N_n - 4) = b_1^2 + b_2^2 + b_3^2 + b_4^2  
\end{equation}
 such that $0 \leq b_1 \leq b_2\leq b_3 \leq b_4 $.

 Then $b_4$ is always greater than $1$ (because if all $b_i$'s are $0$ or $1$ then $ \sum_{i=1}^4 b_i^2 \leq 4$).

 We take $w_4 = b_4$. 

 If $b_3 \geq 2$ we take $w_3 = b_3$. If $b_3 = 0$ or $1$ we take $w_3 = 2$ (that means, $w_3 = b_3 +0$ or $w_3 = b_3 +1$ or $w_3 = b_3 +2$). We choose $w_2$ by the same way.
 
 If $b_1 \geq 1$ we choose $w_1 = b_1$. If $b_1 = 0$ we choose $w_1 = 1$.

Then by construction $L_n \in \mathcal{C}$ for all $n \in \mathbb{N}$ and 
\begin{equation}\label{eq222}
\sum_{i=1}^4 w_i^2 =  \sum_{i=1}^4 b_i^2 + \kappa_{N_n}
\end{equation}
where $0\leq \kappa_{N_n} \leq 9$ is an integer.

Then from equation \ref{eq111} and equation \ref{eq222} we get 
$$
Tr_2 (L_n) = \sum (l^{(n)}_{ij})^2 = \sum_{i=1}^4 w_i^2 + 2(N_n -1) + 4(N_n - 4) = a_{N_n}^2 - 2a_{N_n -1} + \kappa_{N_n}
$$
This implies 
$$
\overline{Tr_2 (L_n)} = \frac{a_{N_n}^2 - 2a_{N_n -1} + \kappa_{N_n}}{N_n}
$$
Since $\kappa_{N_n}$ is finite and $r_n = \frac{a_{N_n}^2 - 2a_{N_n -1}}{N_n}$ converges to $r$ then $\overline{Tr_2 (L_n)}$ also converges to $r$.

Hence completed.




\end{proof}

\end{document}